\newtheorem{theorem}{Theorem}[section]
\newtheorem{lemma}[theorem]{Lemma}
\theoremstyle{definition}
\newtheorem{definition}[theorem]{Definition}
\theoremstyle{remark}
\newtheorem{remark}[theorem]{Remark}
\numberwithin{equation}{section}
\begin{document}

\setcounter{page}{1}
\title[Multilinear fractional integral operators]{Multilinear fractional integral operators on
non-homogeneous metric measure spaces}

\author[Huajun Gong, Rulong Xie \MakeLowercase{and} Chen Xu]{Huajun Gong,$^{1,2}$ Rulong Xie$^{3,4}$$^{*}$ \MakeLowercase{and} Chen Xu$^{1,2}$}

\address{$^{1}$College of Information Engineering, Shenzhen University, Shenzhen 518060, PR China;
\newline
$^{2}$College of Mathematics and Statistics, Shenzhen University, Shenzhen 518060, PR China.}
\email{\textcolor[rgb]{0.00,0.00,0.84}{huajun84@hotmail.com; xuchen\_szu@szu.edu.cn}}

\address{$^{3}$School of Mathematical Sciences, University of
Science and Technology of China, Hefei 230026, PR China;
\newline
$^{4}$Department of Mathematics, Chaohu University, Hefei 238000, PR China.}
\email{\textcolor[rgb]{0.00,0.00,0.84}{xierl@mail.ustc.edu.cn}}

%\dedicatory{This paper is dedicated to Professor ABCD}

%\let\thefootnote\relax\footnote{Copyright 2016 by the Tusi Mathematical Research Group.}

\subjclass[2010]{Primary 42B25; Secondary 47B47.}

\keywords{multilinear fractional integral; commutators; non-homogeneous metric measure space.}

\date{Feb. 18th, 2016.
\newline \indent $^{*}$Corresponding author}

\begin{abstract}
Let $(X,d,\mu)$ be a non-homogeneous metric measure space satisfying both the geometrically doubling and the upper doubling measure
conditions. In this paper, the boundedness of multilinear fractional integral operator in this
setting is proved. Via a sharp maximal operator, the
boundedness of commutators generated by multilinear fractional integral operator with
$RBMO(\mu)$ function on non-homogeneous metric measure spaces in
Lebesgue spaces is obtained.
\end{abstract} \maketitle

\section{Introduction and preliminaries}
It is well known that a space of
homogeneous type is the space, which satisfies the assumption of the doubling
measure condition, i.e. there exists a constant $C>0$ such that
$\mu(B(x,2r))\leq C\mu(B(x,r))$ for all $x\in \rm{supp} \mu$ and
$r>0$. However non-doubling measure is a nonnegative
measure $\mu$ only satisfies the polynomial growth condition,
i.e., for all $x \in X$ and $r>0$, there exists a constant $C>0$ and
$k\in(0, n]$ such that,
\begin{equation}\label{equ1}
\mu(B(x, r)) \leq C_{0}r^{k},
\end{equation}
where $B(x, r)= \{y \in X: |y-x| < r\}$. This brings
rapid development in harmonic analysis (see \cite{CS,GM,
HMY1,HMY2,NTV,T1,T2,XS}). As an important application, it is to
solve the long-standing open Painlev$\acute{e}$'s problem (see
\cite{T1}).

   In \cite{H}, Hyt\"{o}nen pointed out that the doubling measure is not the special case of the non-doubling measures.
To overcome this difficulty, a kind of metric measure space $(X,d,\mu)$, that satisfies the
geometrically doubling and the upper doubling measure conditions (see
Definition 1.1 and 1.2) is introduced by Hyt\"{o}nen in \cite{H}, which is called the non-homogeneous metric measure space. The
highlight of this kind of space is that it includes both space of
homogeneous type and metric spaces with polynomial growth measures
as special cases. From then on, a lot of results paralleled to homogeneous
spaces and non-doubling measure spaces are obtained
(see\cite{BD,FLYY,FYY1,FYY2,H,HM,HLYY,HYY,LY1,LY2,LY3,LYY} and the references therein). For example, Hyt\"{o}nen et al. \cite{HYY} and Bui and Duong\cite{BD} independently
introduced the atomic Hardy space $\mathcal H^{1}(\mu)$ and obtained that the
dual space of $\mathcal H^{1}(\mu)$ is $RBMO(\mu)$. Bui and Duong\cite{BD} also
proved that Calder\'{o}n-Zygmund operator and commutators of
Calder\'{o}n-Zygmund operator with RBMO function are bounded in
$L^{p}(\mu)$ for $1<p<\infty$. Later, Lin and Yang \cite{LY1} introduced
the space $RBLO(\mu)$ and proved the maximal Calder\'{o}n-Zygmund
operator is bounded from $L^{\infty}(\mu)$ into $RBLO(\mu)$.
Recently, some equivalent characterizations was established by Liu  et al. \cite{LYY} for the boundedness of
Carder¡äon-Zygmund operators on $L^{p}(\mu)$ for $1<p<\infty$. Fu et al. \cite{FYY1,FYY2} established the boundedness of multilinear commutators of Calder¡äon-Zygmund
operators and commutators of generalized fractional integrals with $RBMO(\mu)$. Fu et al. \cite{FLYY}
partially established the theory of the Hardy space $\mathcal H^p$ with $p\in(0,1]$ on $(X,d,\mu)$.  The readers can refer to the survey
\cite{YYF} and the monograph \cite{YYH} for more developments on harmonic analysis in non-homogeneous metric measure spaces.

   At the other hand, the theory on multilinear integral operators has been studied by some researchers. Coifman
and Meyers\cite{CM} firstly established the theory of bilinear
Calder\'{o}n-Zygmund operators. Later, Gorafakos and Torres \cite{GT1,GT2}
established the boundedness of multilinear singular integral on the
product Lebesgue spaces and Hardy spaces. Xu\cite{X1,X2} established the properties of
multilinear singular integrals and commutators on non-doubling
measure spaces $(R^{n},\mu)$. The bounedeness of multilinear fractional integral and commutators on non-doubling
measure spaces $(R^{n},\mu)$ was proved by Lian and Wu in \cite{LW}. In  non-homogeneous metric measure spaces,
Hu et al. \cite{HMY3} established  the weighted norm inequalities for multilinear Calder\'{o}n-Zygmund operators.
The boundedness of commutators of multilinear singular integrals on Lebesgue spaces was obtained by Xie et al. in \cite{XGZH}.

 In this paper, multilinear fractional integral operator and commutators
generated by multilinear fractional integral with $RBMO(\mu)$
function on non-homog\\
eneous metric spaces are introduced.
And it is proved that multilinear farctional integral operators and
commutators are bounded in Lebesgue spaces on non-homogeneous metric spaces, provided that factional integral is bounded from
 $L^{r}$ into $L^{s}$ for all $r\in(1,\,1/\beta)$ and
$1/s=1/r-\beta$ with $0<\beta<1$. The results in
this paper include the corresponding results on both the
homogeneous spaces and $(R^{n},\mu)$ with non-doubling measure
spaces.

    We first recall some notations and definitions.

\begin{definition}\label{def1.1}
\cite{H} A metric spaces $(X,d)$
is called geometrically doubling if there exists some $N_{0}\in
\mathbf{N}$ such that, for any ball $B(x,r)\subset X$, there exists
a finite ball covering $\{B(x_i,r/2)\}_i$ of $B(x,r)$ such that the
cardinality of this covering is at most $N_0$.
\end{definition}

\begin{definition}\label{def1.2}
\cite{H} A metric measure space
$(X,d,\mu)$ is said to be upper doubling if $\mu$ is a Borel measure
on $X$ and there exists a function $\lambda : X
\times(0,+\infty) \rightarrow (0,+\infty)$ and a constant
$C_{\lambda} >0$ such that for each $x\in X, r \longmapsto (x,r)$ is
non-decreasing, and for all $x\in X, r >0$,
 \begin{equation}
 \mu(B(x, r))\leq \lambda(x, r)\leq C_{\lambda}\lambda(x, r/2).
\end{equation}
\end{definition}

\begin{remark}
(i)\quad A space of homogeneous type is a
special case of upper doubling spaces, where one can take $\lambda(x, r)\equiv \mu(B(x,r))$. On the other
hand, a metric space $(X,d,\mu)$ satisfying the polynomial growth
condition (\ref{equ1})(in particular, $(X,d,\mu)\equiv(R^n, |\cdot|, \mu)$
with $\mu$ satisfying (\ref{equ1}) for some $k\in (0, n])$) is also an
upper doubling measure space if we take $\lambda(x, r)\equiv
Cr^{k}$.

(ii) \quad Let$(X,d,\mu)$ be an upper doubling space and $\lambda$
be a function on $X \times(0,+\infty)$ as in Definition
1.2. In \cite{H}, it was showed that there exists another
function $\tilde{\lambda}$ such that for all $x, y \in X$ with $d(x,
y)\leq r$,
\begin{equation}\label{equ3}
\tilde{\lambda}(x, r)\leq \tilde{C}\tilde{\lambda}(y, r).
\end{equation}
 Thus, in this paper, we always suppose that $\lambda$ satisfies
(\ref{equ3}) and $\lambda(x, ar) \geq a^{m}\lambda(x, r)$ for all $ x \in X $ and $ a,r
> 0$.

(iii) As shown in \cite{TL}, the upper doubling condition
is equivalent to the weak growth condition:
there exists a function $\lambda: X \times(0,\infty)\to(0,\infty)$,
with $r\to\lambda(x,r)$ non-decreasing, a positive constant $C_\lambda$ depending on $\lambda$
and $\epsilon$ such that
\begin{itemize}
\item[(a)] for all $r\in(0,\infty)$, $t\in[0,r]$, $x,\,y\in X$ and $d(x,y)\in[0,r]$,
$$|\lambda(y,r+t)-\lambda(x,r)|\le C_\lambda\biggl[\frac{d(x,y)+t}r\biggr]^{\epsilon}
\lambda(x,r);$$

\item[(b)] for all $x\in X$ and $r\in(0,\infty)$,
$$\mu(B(x,r))\le\lambda(x,r).$$
\end{itemize}
\end{remark}

\begin{definition}\quad
 Let $\alpha,\beta \in (1,+\infty)$ and a ball $B\subset X$ is called $(\alpha,
\beta)$-doubling if $\mu(\alpha B)\leq \beta \mu (B)$.

As in Lemma 2.3 of \cite{BD}, there exist plenty of doubling balls
with small radii and with large radii. Throughout this paper,
unless $\alpha$ and $\beta$ are specified otherwise, by an
$(\alpha,\beta)$ doubling ball we mean a $(6,\beta_{0})$-doubling
with a fixed number $\beta_0 >\max\{C_{\lambda}^{3log_{2}6},
6^{n}\}$, where $n=log_{2}N_{0}$ be viewed as a geometric dimension
of the spaces.
\end{definition}

\begin{definition}\cite{FYY2}\quad Let $0\leq\gamma<1$. For any two balls
$B\subset Q$, set $N_{B,Q}$ be the smallest integer
satisfying $6^{N_{B,Q}}r_{B}\geq r_Q$, then one defines
\begin{equation}
K^{(\gamma)}_{B,Q} = 1+\sum_{k=1}^{N_{B,Q}}
\biggl[\frac{\mu(6^{k}B)}{\lambda(x_B,6^{k}r_{B})}\biggr]^{(1-\gamma)}.
\end{equation}
For $\gamma=0$, we simply write $K^{(0)}_{B,Q}=K_{B,Q}$.

The multilinear fractional integral on
nonhomogeneous metric measure spaces is defined as follows.
\end{definition}

\begin{definition}\quad
 Let $\alpha\in (0,m)$. A kernel
 \begin{equation*}
 K(\cdot,\cdots,\cdot)\in L_{loc}^{1}\left((X)^{m+1}\backslash\{(x,y_{1}\cdots,y_{j},\cdots,y_{m}):x=y_j, 1\leq j\leq m\}\right)
 \end{equation*}
 is called an $m$-linear fractional integral kernel if it satisfies:

(i)\begin{equation}\label{equ6}
 \quad |K(x,y_{1},\cdots,y_{j},\cdots, y_{m})|\leq
\frac{C}{\biggl[\sum_{j=1}^{m}\lambda(x,d(x,y_{j}))\biggr]^{m-\alpha}}
 \end{equation}
for all $(x,y_{1}\cdots,y_{j},\cdots,y_{m})\in (X)^{m+1}$ with
$x\neq y_{j}$ for some $j$.

 (ii) There exists $0<\delta\leq 1$ such that
\begin{eqnarray}\label{equ7}
&&|K(x,y_{1},\cdots,y_{j},\cdots,y_{m})-K(x',y_{1},\cdots,y_{j},\cdots,y_{m})|\nonumber\\
&\leq&\frac{Cd(x,x')^{\delta}}{\biggl[\sum\limits_{j=1}^{m}d(x,y_{j})\biggr]^{\delta}\biggl[\sum\limits_{j=1}^{m}\lambda(x,d(x,y_{j}))\biggr]^{m-\alpha}},
\end{eqnarray}
proved that $Cd(x,x')\leq \max\limits_{1\leq j\leq m}d(x,y_{j})$ and
for each $j$,
 \begin{eqnarray}\label{equ8}
&&|K(x,y_{1},\cdots,y_{j},\cdots,y_{m})-K(x,y_{1},\cdots,y'_{j},\cdots,y_{m})|\nonumber\\
 &\leq&\frac{Cd(y_{j},y'_{j})^{\delta}}{\biggl[\sum\limits_{j=1}^{m}d(x,y_{j})\biggr]^{\delta}
 \biggl[\sum\limits_{j=1}^{m}\lambda(x,d(x,y_{j}))\biggr]^{m-\alpha}},
\end{eqnarray}
proved that $Cd(y_{j},y'_{j})\leq \max\limits_{1\leq j\leq
m}d(x,y_{j})$.

A multilinear operators $I_{\alpha,m}$ is called the multilinear
fractional integral operator with the above
kernel $K$ satisfying (\ref{equ6}), (\ref{equ7}) and (\ref{equ8}) if, for $f_{1},\cdots
f_{m}$ are $L^{\infty}$ functions with compact support and $x\notin
\bigcap_{j=1}^{m}supp f_{j}$,
 \begin{eqnarray}\label{equ9}
 &&I_{\alpha,m}(f_{1},\cdots f_{m})(x)\nonumber\\
 &&=\int_{X^{m}}K(x,y_{1},\cdots y_{m})f_{1}(y_{1})\cdots
 f_{m}(y_{m})d\mu(y_{1})\cdots d\mu(y_{m}).
\end{eqnarray}
For $m=1$, we simply write $I_{\alpha,1}$ by $I_{\alpha}$, which is the generalized fractional integral operator introduced by \cite{FYY2}.
\end{definition}

\begin{remark}\quad Because $\max\limits_{1\leq j\leq
m}d(x,y_{j})\leq \sum\limits_{j=1}^{m}d(x,y_{j})\leq
m\max\limits_{1\leq j\leq m}d(x,y_{j})$, (ii) in Definition 1.5 is
equivalent to (ii') in the following statement.

 (ii') There exists $0<\delta\leq 1$ such that

 \begin{eqnarray*}
 &&|K(x,y_{1},\cdots,y_{j},\cdots,y_{m})-K(x',y_{1},\cdots,y_{j},\cdots,y_{m})|\\
 &\leq & \frac{Cd(x,x')^{\delta}}{\biggl[\max\limits_{1\leq j\leq
m}d(x,y_{j})\biggr]^{\delta}\biggl[\sum\limits_{j=1}^{m}\lambda(x,d(x,y_{j}))\biggr]^{m-\alpha}},
\end{eqnarray*}
proved that $Cd(x,x')\leq \max\limits_{1\leq j\leq m}d(x,y_{j})$ and
for each $j$,
\begin{eqnarray*}
&&|K(x,y_{1},\cdots,y_{j},\cdots,y_{m})-K(x,y_{1},\cdots,y'_{j},\cdots,y_{m})|\\
 &\leq &\frac{Cd(y_{j},y'_{j})^{\delta}}{\biggl[\max\limits_{1\leq j\leq
m}d(x,y_{j})\biggr]^{\delta}
 \biggl[\sum\limits_{j=1}^{m}\lambda(x,d(x,y_{j}))\biggr]^{m-\alpha}},
\end{eqnarray*}
proved that $Cd(y_{j},y'_{j})\leq \max\limits_{1\leq j\leq
m}d(x,y_{j})$.
\end{remark}

\begin{definition}\cite{BD}\quad Let $\rho>1$ be some
constant. A function $b\in L_{loc}^{1}(\mu)$ is said to belong
to $RBMO(\mu)$ if there exists a constant $C
>0$ such that for any ball $B$
\begin{equation}\label{equ11}
\frac{1}{\mu(\rho B)}\int_{B}|b(x)-m_{\tilde{B}}b|d\mu(x)\leq C,
\end{equation}
and for any two doubling ball $B\subset Q$,
 \begin{equation}\label{equ12}
  |m_{B}(b)-m_{Q}(b)|\leq CK_{B,Q}.
\end{equation}
$\widetilde{B}$ is the smallest $(\alpha,\beta)$-doubling ball of
the form $6^{k}B$ with $k\in {\mathbf{N}}\bigcup\{0\}$, and
$m_{\widetilde{B}}(b)$ is the mean value of $b$ on $\widetilde{B}$,
namely,
$$m_{\widetilde{B}}(b)=\frac{1}{\mu(\widetilde{B})}\int_{\widetilde{B}}b(x)d\mu(x).$$
The minimal constant $C$ appearing in (\ref{equ11}) and (\ref{equ12}) is defined to be
the $RBMO(\mu)$ norm of $f$ and denoted by $||b||_{\ast}$.
\end{definition}

For $1\leq i \leq m$, denote by $C_{i}^{m}$ the family of all
finite subsets
$\sigma=\{\sigma(1),\sigma(2),\cdot\cdot\cdot,\sigma(i)\}$ of
$\{1,2,\cdot\cdot\cdot,m\}$ with $i$ different elements. For any
$\sigma\in C_{i}^{m}$, the complementary sequences $\sigma'$ is
given by $\sigma'=\{1,2,\cdot\cdot\cdot,m\}\backslash\sigma$.
Moreover, for $b_{i}\in RBMO(\mu),i=1,\cdots,m$, let
$\vec{b}=(b_{1},b_{2},\cdot\cdot\cdot,b_{m})$ be a finite family of
locally integrable functions. For all $1\leq i\leq m$ and
$\sigma=\{\sigma(1),\cdot\cdot\cdot,\sigma(i)\}\in C_{i}^{m}$, we
set $\vec{b}_{\sigma}=(b_{\sigma(1)},\cdot\cdot\cdot,b_{\sigma(i)})$
and the product $b_{\sigma}(x)=b_{\sigma(1)}(x)\cdot\cdot\cdot
b_{\sigma(i)}(x)$. Also, we denote $\vec{f}=(f_{1},\cdots, f_{m})$,
$\vec{f}_{\sigma}=(f_{\sigma(1)},\cdots, f_{\sigma(i)})$ and
$\vec{b}_{\sigma'}\vec{f}_{\sigma'}=(b_{\sigma'(i+1)}f_{\sigma'(i+1)},\cdots,
b_{\sigma'(m)}f_{\sigma'(m)})$.

\begin{definition}\quad A kind of commutators generated
by multilinear fractional integral operators $I_{\alpha,m}$ with $b_{i}\in
RBMO(\mu), i=1,\cdots, m$ is defined as follows:
\begin{equation*}
[\vec{b},I_{\alpha,m}](\vec{f})(x)=\sum_{i=0}^{m}\sum_{\sigma \in
C_{i}^{m}}(-1)^{m-i}b_{\sigma}(x)I_{\alpha,m}(\vec{f}_{\sigma},\vec{b}_{\sigma'}\vec{f}_{\sigma'})(x).
\end{equation*}
\end{definition}

In particular, when $m=2$, it is easy to see that
\begin{equation}\label{equ15}
 \begin{split}
[b_{1},b_{2},I_{\alpha,2}](f_{1},f_{2})(x)=&b_{1}(x)b_{2}(x)I_{\alpha,2}(f_{1},f_{2})(x)-b_{1}(x)I_{\alpha,2}(f_{1},b_{2}f_{2})(x)\\
&-b_{2}(x)I_{\alpha,2}(b_{1}f_{1},f_{2})(x)+I_{\alpha,2}(b_{1}f_{1},b_{2}f_{2})(x).
\end{split}
\end{equation}
$[b_{1},I_{\alpha,2}]$ and $[b_{2},I_{\alpha,2}]$ are defined as follows respectively.
\begin{equation*}[b_{1},I_{\alpha,2}](f_{1},f_{2})(x)=b_{1}(x)I_{\alpha,2}(f_{1},f_{2})(x)-I_{\alpha,2}(b_{1}f_{1},f_{2})(x),\end{equation*}
\begin{equation*}[b_{2},I_{\alpha,2}](f_{1},f_{2})(x)=b_{2}(x)I_{\alpha,2}(f_{1},f_{2})(x)-I_{\alpha,2}(f_{1},b_{2}f_{2})(x).\end{equation*}

Without loss of generality, we only
consider the case of $m=2$. Now let us state the main
results.

\begin{theorem}\label{thm-main1.1}
Let $0<\alpha<2$, $1<p_{1}$, $p_{2}<+\infty$, $f_{1}\in
L^{p_{1}}(\mu)$ and $f_{2}\in L^{p_{2}}(\mu)$. If $I_\beta$ is bounded from $L^{r}$ into $L^{s}$ for all $r\in(1,\,1/\beta)$ and
$1/s=1/r-\beta$ with $0<\beta<1$, then
there exists a constant $C>0$ such that
\begin{equation*}
||I_{\alpha,2}(f_{1},f_{2})||_{L^{q}(\mu)}\leq C
||f_{1}||_{L^{p_{1}}(\mu)}||f_{2}||_{L^{p_{2}}(\mu)},
\end{equation*}
where $\dfrac{1}{q}=\dfrac{1}{p_{1}}+\dfrac{1}{p_{2}}-\alpha$.
\end{theorem}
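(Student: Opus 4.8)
The plan is to dominate $I_{\alpha,2}$ pointwise by a product of two linear (generalized) fractional integrals and then to invoke the assumed linear bounds together with H\"older's inequality. The starting observation is the elementary inequality
\begin{equation*}
(a+b)^{s+t}\ \ge\ a^{s}\,b^{t}\qquad (a,b\ge 0,\ s,t\ge 0),
\end{equation*}
which follows from $(a+b)^{s}\ge a^{s}$ and $(a+b)^{t}\ge b^{t}$. Given the exponents $p_{1},p_{2}$, note that $1/q=1/p_{1}+1/p_{2}-\alpha$ is (implicitly) positive, so $\alpha<1/p_{1}+1/p_{2}$; a short check then shows that the open interval
\begin{equation*}
\bigl(\ \max\{0,\ \alpha-1/p_{2},\ \alpha-1\}\ ,\ \ \min\{\alpha,\ 1/p_{1}\}\ \bigr)
\end{equation*}
is nonempty. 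I would fix $\beta_{1}$ in this interval and set $\beta_{2}=\alpha-\beta_{1}$, so that $\beta_{1},\beta_{2}\in(0,1)$, $p_{1}\in(1,1/\beta_{1})$, $p_{2}\in(1,1/\beta_{2})$, and, writing $1/s_{j}=1/p_{j}-\beta_{j}$, one has $1<s_{1},s_{2}<\infty$ and $1/s_{1}+1/s_{2}=1/q$.

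With the decomposition $2-\alpha=(1-\beta_{1})+(1-\beta_{2})$, applying the inequality above with $a=\lambda(x,d(x,y_{1}))$, $b=\lambda(x,d(x,y_{2}))$, $s=1-\beta_{1}$, $t=1-\beta_{2}$ turns the size estimate (\ref{equ6}) into
\begin{equation*}
|K(x,y_{1},y_{2})|\ \le\ \frac{C}{\lambda(x,d(x,y_{1}))^{1-\beta_{1}}\ \lambda(x,d(x,y_{2}))^{1-\beta_{2}}}.
\end{equation*}
Hence, for $f_{1},f_{2}\in L^{\infty}$ with compact support and $x\notin \mathrm{supp}\,f_{1}\cap \mathrm{supp}\,f_{2}$, the representation (\ref{equ9}) yields the pointwise bound
\begin{equation*}
|I_{\alpha,2}(f_{1},f_{2})(x)|\ \le\ C\,I_{\beta_{1}}(|f_{1}|)(x)\cdot I_{\beta_{2}}(|f_{2}|)(x),
\end{equation*}
where $I_{\beta_{j}}$ is the generalized fractional integral operator with the positive kernel $\lambda(x,d(x,y))^{-(1-\beta_{j})}$; this kernel satisfies (\ref{equ6}) with $m=1$, and its regularity follows from the assumed properties of $\lambda$ (see (\ref{equ3})), so $I_{\beta_{j}}$ lies in the class for which the hypothesis of the theorem is assumed. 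Therefore $\|I_{\beta_{j}}(|f_{j}|)\|_{L^{s_{j}}(\mu)}\le C\|f_{j}\|_{L^{p_{j}}(\mu)}$, and since $1/s_{1}+1/s_{2}=1/q$, H\"older's inequality gives
\begin{equation*}
\|I_{\alpha,2}(f_{1},f_{2})\|_{L^{q}(\mu)}\ \le\ C\,\|I_{\beta_{1}}(|f_{1}|)\|_{L^{s_{1}}(\mu)}\,\|I_{\beta_{2}}(|f_{2}|)\|_{L^{s_{2}}(\mu)}\ \le\ C\,\|f_{1}\|_{L^{p_{1}}(\mu)}\,\|f_{2}\|_{L^{p_{2}}(\mu)}.
\end{equation*}
This proves the estimate for $f_{1},f_{2}$ bounded with compact support; the general case $f_{j}\in L^{p_{j}}(\mu)$ would then follow by a routine truncation and density argument, together with the usual decomposition of $X^{2}$ used to define $I_{\alpha,2}$ away from the diagonal.

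The computations here are all elementary, so I do not anticipate a genuine obstacle; the only point deserving care is making the pointwise inequality globally meaningful, i.e.\ checking that $I_{\beta_{1}}(|f_{1}|)(x)\,I_{\beta_{2}}(|f_{2}|)(x)<\infty$ for $\mu$-a.e.\ $x$ and that $I_{\alpha,2}$ is well defined on all of $L^{p_{1}}(\mu)\times L^{p_{2}}(\mu)$, both of which are delivered by the assumed linear $L^{p}\to L^{s}$ bounds and a density argument. Note that only the size condition (\ref{equ6}) on the kernel is used for this theorem; the smoothness conditions (\ref{equ7}) and (\ref{equ8}) will instead be needed for the commutator estimates.
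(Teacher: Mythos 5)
Your proposal is correct and follows essentially the same route as the paper: split $\alpha=\beta_1+\beta_2$ with $\beta_i\in(0,1)$ and $\beta_i<1/p_i$, dominate the kernel via $\prod_{j}\lambda(x,d(x,y_j))^{1-\beta_j}\le\bigl[\sum_j\lambda(x,d(x,y_j))\bigr]^{2-\alpha}$ to get $|I_{\alpha,2}(f_1,f_2)(x)|\le C\,I_{\beta_1}(|f_1|)(x)\,I_{\beta_2}(|f_2|)(x)$, then apply the assumed linear bounds and H\"older's inequality with $1/s_1+1/s_2=1/q$. Your explicit verification that the splitting interval is nonempty and your remark that only the size condition (\ref{equ6}) is needed are details the paper leaves implicit, but the argument is the same.
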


\begin{theorem}\label{thm-main1.2}
Set $\mu$ is a Radon measure with $||\mu||=\infty$,
$0<\alpha<2$, $1<p_{1}$, $p_{2}<+\infty$,
$f_{1}\in L^{p_{1}}(\mu)$, $f_{2}\in L^{p_{2}}(\mu)$, $b_{1},b_{2}\in
RBMO(\mu)$ and if $I_\beta$ is bounded from $L^{r}$ into $L^{s}$ for any $r\in(1,\,1/\beta)$,
$1/s=1/r-\beta$ with $0<\beta<1$,  then there
exists a constant $C>0$ such that
\begin{equation*}
||[b_{1},b_{2},I_{\alpha,2}](f_{1},f_{2})||_{L^{q}(\mu)}\leq C
||f_{1}||_{L^{p_{1}}(\mu)}||f_{2}||_{L^{p_{2}}(\mu)},
\end{equation*}
where $\dfrac{1}{q}=\dfrac{1}{p_{1}}+\dfrac{1}{p_{2}}-\alpha$.
\end{theorem}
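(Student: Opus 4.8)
The plan is to follow the standard route for commutator bounds on non-homogeneous metric measure spaces: dominate $[b_1,b_2,I_{\alpha,2}](\vec f)$ pointwise by a sharp maximal function, and then combine the $L^{q}$-boundedness of the sharp maximal operator with Theorem \ref{thm-main1.1}. Let $M^{\sharp}$ denote the Bui--Duong sharp maximal operator on $(X,d,\mu)$ and let $M_{r,(\rho)}$, $\rho>1$, be the non-centered maximal operator $M_{r,(\rho)}g(x)=\sup_{B\ni x}\bigl(\frac{1}{\mu(\rho B)}\int_{B}|g|^{r}\,d\mu\bigr)^{1/r}$, together with the fractional maximal operators adapted to the exponent $\alpha$. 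Two facts from the existing theory will be used: (i) the sharp-maximal inequality $\|g\|_{L^{q}(\mu)}\le C\|M^{\sharp}g\|_{L^{q}(\mu)}$, valid for $g$ in a suitable class precisely because $\|\mu\|=\infty$ (this is where the hypothesis on $\mu$ enters); and (ii) the $L^{p}$-boundedness of $M_{r,(\rho)}$ for $r<p$, and of the relevant fractional maximal operators between the appropriate Lebesgue spaces.

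The core of the argument is a pointwise estimate of the shape
\begin{equation*}
M^{\sharp}\bigl([b_1,b_2,I_{\alpha,2}](f_1,f_2)\bigr)(x)\le C\|b_1\|_{\ast}\|b_2\|_{\ast}\Bigl[M_{r,(\rho)}\bigl(I_{\alpha,2}(f_1,f_2)\bigr)(x)+S(f_1,f_2)(x)+S_1(x)+S_2(x)\Bigr],
\end{equation*}
where $S(f_1,f_2)$ is a finite sum of products of (fractional) maximal functions of $f_1$ and $f_2$, and $S_i$ is built from the sharp/maximal functions of the one-fold commutators $[b_i,I_{\alpha,2}](f_1,f_2)$. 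To establish it, fix a ball $B\ni x$ with doubling envelope $\widetilde B$, subtract $m_{\widetilde B}(b_i)$ from each $b_i$, and use the identity \eqref{equ15} to rewrite the commutator on $B$ as a sum of terms each carrying at least one factor $b_i-m_{\widetilde B}(b_i)$ either outside $I_{\alpha,2}$ or inside it against $f_i$. Split $f_i=f_i^{0}+f_i^{\infty}$ with $f_i^{0}=f_i\chi_{\frac{6}{5}B}$: the local pieces are handled via the $L^{r}\to L^{s}$ mapping property furnished by Theorem \ref{thm-main1.1} and the generalized H\"older inequality for $RBMO(\mu)$ (in the form of \cite{BD,FYY2}), while the global pieces are controlled using the kernel regularity \eqref{equ7} and \eqref{equ8}, which produce the geometric sums defining $K_{B,Q}^{(\gamma)}$; these coefficients, together with the $K$-type factors arising from comparing averages over $B$ and over the successive dilates $6^{k}B$, are absorbed by \eqref{equ12} and the elementary bound $K_{B,Q}\le C$ on comparable balls, all dominated by $\|b_i\|_{\ast}$.

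The proof then proceeds by a finite bootstrap. One first proves the analogous, simpler pointwise bound for the one-fold commutators $[b_i,I_{\alpha,2}]$; combined with the sharp-maximal inequality, the $L^{p}$-boundedness of $M_{r,(\rho)}$, and $I_{\alpha,2}\colon L^{p_1}\times L^{p_2}\to L^{q}$ from Theorem \ref{thm-main1.1}, this yields $[b_i,I_{\alpha,2}]\colon L^{p_1}\times L^{p_2}\to L^{q}$. Feeding these bounds into the displayed estimate for the double commutator and applying $\|\cdot\|_{L^{q}}\le C\|M^{\sharp}(\cdot)\|_{L^{q}}$ once more gives Theorem \ref{thm-main1.2}. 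Before invoking the sharp-maximal inequality one must check the a priori finiteness $\|[b_1,b_2,I_{\alpha,2}](f_1,f_2)\|_{L^{q}(\mu)}<\infty$; this follows by a routine truncation of the $b_i$ and a Fatou argument.

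I expect the main obstacle to be the pointwise sharp maximal estimate, and within it the bookkeeping of the $K_{B,Q}$ and $K_{B,Q}^{(\gamma)}$ coefficients produced both by the kernel regularity on the far annuli and by the change of averaging ball built into $M^{\sharp}$, together with verifying that the fractional gain $\alpha$ is compatible with the exponent relation $1/q=1/p_1+1/p_2-\alpha$ so that every term closes. The cross terms, with one $b_i$ outside and the other inside $I_{\alpha,2}$, need extra care: they couple a one-fold commutator with an $RBMO(\mu)$ multiplication, so the generalized H\"older inequality must be applied at exactly the Lebesgue exponents dictated by the target $q$.
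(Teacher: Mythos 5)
Your proposal follows essentially the same route as the paper: a pointwise sharp maximal estimate for $[b_1,b_2,I_{\alpha,2}]$ obtained by subtracting $m_{\widetilde B}(b_i)$, splitting $f_i=f_i\chi_{\frac{6}{5}B}+f_i\chi_{X\backslash\frac{6}{5}B}$, treating the local parts with Theorem \ref{thm-main1.1} and the $RBMO(\mu)$ H\"older estimates and the far parts with the kernel regularity, then bootstrapping through the analogous estimates for the one-fold commutators $[b_i,I_{\alpha,2}]$ and closing with the sharp maximal inequality (where $\|\mu\|=\infty$ enters) and the boundedness of the (fractional) maximal operators. The only refinement worth noting is that the paper works with the fractional sharp maximal operator $M^{\sharp,(\alpha/2)}$, whose $K^{(\alpha/2)}_{B,Q}$ normalization is exactly what absorbs the coefficients you flag as the main bookkeeping obstacle.
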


\begin{remark}\quad  For $||\mu||<\infty$, by Lemma 2.1 in Section 2, Theorem \ref{thm-main1.2} also holds if one assumes that $\int_{X}G(f_1,f_2)(x)d\mu(x)=0$ with the operator $G$ be replaced by $I_{\alpha,2}$, $[b_1,I_{\alpha,2}]$,
$[b_2,I_{\alpha,2}]$ and $[b_1,b_2,I_{\alpha,2}]$.
\end{remark}

This paper is organized as follows. Theorem \ref{thm-main1.1} and Theorem \ref{thm-main1.2} are proved in Section 2.  In Section 3, some applications are stated. Throughout this paper, $C$ always denotes a
positive constant independent of the main parameters involved, but
it may be different from line to line.

\section{Proof of Main Results}
\begin{proof}[Proof of Theorem \ref{thm-main1.1}] Let $\alpha=\alpha_{1}+\alpha_{2}$, $0<\alpha_i<1/p_i<1$ for $i=1,2$. It is easy to check that
\begin{equation*}
\prod_{j=1}^{2}[\lambda(x,d(x,y_{j}))]^{1-\alpha_i}\leq
\biggl[\sum_{j=1}^{2}\lambda(x,d(x,y_{j}))\biggr]^{2-\alpha},
\end{equation*}
thus
\begin{equation*}
|I_{\alpha,2}(f_{1},f_{2})(x)|\leq \prod_{j=1}^{2}I_{\alpha_i}(|f_{i}|)(x).
\end{equation*}
Let $1/q_i=1/p_i-\alpha_i$ and $1/q_{1}+1/q_2=1/q$, $1<q_{i}<\infty$. It follows from the H\"{o}lder's inequality and the boundedness of $I_{\alpha_i},i=1,2$ that
\begin{eqnarray*}
&&||I_{\alpha,2}(f_{1},f_{2})||_{L^{q}(\mu)}\\
&\leq& \bigg\|\prod_{j=1}^{2}I_{\alpha_i}(|f_{i}|)\bigg\|_{L^{q}(\mu)}\\
&\leq &||I_{\alpha_1}(|f_{1}|)||_{L^{q_1}(\mu)}||I_{\alpha_2}(|f_{2}|)(x)||_{L^{q_2}(\mu)}\\
&\leq& ||f_{1}||_{L^{p_1}(\mu)}||f_{2}||_{L^{p_2}(\mu)}.
\end{eqnarray*}
The proof of Theorem \ref{thm-main1.1} is completed.
\end{proof}

To prove Theorem \ref{thm-main1.2}, first we give some notations and lemmas.

Let $f \in L_{loc}^{1}(\mu)$ and $0<\beta<1$, the sharp maximal operator is
 \begin{equation*}
M^{\sharp,(\beta)}f(x)=\sup _{B\ni
x}\frac{1}{\mu(6B)}\int_{B}|f(y)-m_{\widetilde{B}}(f)|d\mu(y)
+\sup_{(B,Q)\in\Delta_{x}}\frac{|m_{B}(f)-m_{Q}(f)|}{K^{(\beta)}_{B,Q}},
\end{equation*}
where $\Delta_{x}:=\{(B,Q):x\in B\subset Q\ \text{and} \ B, \ Q\ \text{are
doubling balls}\}$.\\

 The non centered doubling maximal operator is
$$Nf(x)=\sup_{B\ni x,\ B \
doubling}\frac{1}{\mu(B)}\int_{B}|f(y)|d\mu(y).$$
 By the Lebesgue
differential theorem, for any $f\in L^{1}_{loc}(\mu)$, we have
\begin{equation*}
|f(x)|\leq Nf(x)
\end{equation*} for $\mu-a.e. \ x \in X$.

Set $\rho>1$, $p\in (1,\infty)$ and $r\in (1,p)$, the non-centered
maximal operator $M^{(\alpha)}_{r,(\rho)}f$ is defined by
\begin{equation*}
M^{(\alpha)}_{r,(\rho)}f(x)=\sup_{B\ni x}\biggl\{\frac{1}{[\mu(\rho
B)]^{1-\alpha r}}\int_{B}|f(y)|^{r}d\mu(y)\biggr\}^{1/r}.
\end{equation*}
When $r=1$, we simply write $ M^{(0)}_{1,(\rho)}f(x)$ as $M_{(\rho)}f$. If
$\rho\geq 5$, then the operator $M_{(\rho)}f$ is bounded on
$L^{p}(\mu)$ for any $p>1$, and $M^{(\alpha)}_{r,(\rho)}$ is bounded from $\L^{p}(\mu)$
to $\L^{q}(\mu)$ for $p\in (r,1/\alpha)$ and $1/q=1/p-\alpha$(see \cite{FYY2}).

\begin{lemma}\cite{FYY2}%lemma2.1
For $||\mu||<\infty$, if $f \in L^{1}_{loc}(\mu)$, $\int_{X}f(x)d\mu(x)=0$, $1<p<\infty$, $0<\delta<1$, and
$\inf(1,Nf)\in L^{p}(\mu)$, for $0<\beta<1$, then there exists a constant
$C>0$ such that
\begin{equation*}
||N(f)||_{L^{p}(\mu)}\leq
C||M^{\sharp,(\beta)}(f)||_{L^{p}(\mu)}.
\end{equation*}
\end{lemma}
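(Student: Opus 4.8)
The plan is to derive the estimate from a good-$\lambda$ inequality relating the non-centered doubling maximal operator $N$ to the sharp operator $M^{\sharp,(\beta)}$, following the scheme of Tolsa's analogous result for measures of polynomial growth and adapting it to the present non-homogeneous framework. First I would reduce, by a routine truncation, to the case $f\in L^\infty(\mu)$; then $Nf$ is bounded (since $\|\mu\|<\infty$), so every $L^p(\mu)$-norm appearing below is automatically finite, and this finiteness is exactly what will license the absorption step at the end. The hypothesis $\inf(1,Nf)\in L^p(\mu)$ ensures the untruncated conclusion is meaningful and follows by Fatou's lemma once the bounded case is settled. (The parameter $\delta$ in the statement only fixes which exponent is used in the definition of $M^{\sharp,(\beta)}$ borrowed from \cite{FYY2}, and is immaterial here.)

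The core is the good-$\lambda$ inequality: there are constants $C_0>1$ and $\epsilon_0\in(0,1)$ such that, for every $\epsilon\in(0,\epsilon_0)$ and every $\lambda>\lambda_0$ with $\lambda_0$ depending only on $\frac1{\|\mu\|}\int_X|f|\,d\mu$,
\begin{equation*}
\mu\bigl(\{x:Nf(x)>C_0\lambda,\ M^{\sharp,(\beta)}f(x)\le\epsilon\lambda\}\bigr)\le C\epsilon\,\mu\bigl(\{x:Nf(x)>\lambda\}\bigr).
\end{equation*}
Since $N$ is lower semicontinuous, $\Omega_\lambda:=\{Nf>\lambda\}$ is open, and since $\int_X|f|\,d\mu<\infty$ and $\|\mu\|<\infty$ it is a proper subset of $X$ once $\lambda>\lambda_0$; I would take a Whitney-type covering $\{B_i\}$ of $\Omega_\lambda$ with bounded overlap such that each $B_i\subset\Omega_\lambda$ while a fixed dilate of $B_i$ meets $X\setminus\Omega_\lambda$. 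For those $B_i$ containing a nearby point $z_i$ with $M^{\sharp,(\beta)}f(z_i)\le\epsilon\lambda$ (the remaining ones contribute nothing to the left side), and for $x\in B_i$, I would bound $Nf(x)$ by splitting the average over an arbitrary doubling ball $B\ni x$ into the part with $B\subset\kappa B_i$ and the part with $B\not\subset\kappa B_i$, for a fixed dilation factor $\kappa$: the latter ``far'' part is $\lesssim\lambda$ by the Whitney property together with the geometric-doubling and $(6,\beta_0)$-doubling constants; the ``near'' part is handled by writing $f=(f-m_{\widetilde{\kappa B_i}}f)\chi_{\kappa B_i}+m_{\widetilde{\kappa B_i}}f$, applying the weak type $(1,1)$ bound for $N$ (valid because $Nf\le CM_{(6)}f$ and $M_{(6)}$ is bounded as recalled above) to the first summand and the pointwise estimate $|m_{\widetilde{\kappa B_i}}f|\lesssim(1+\epsilon)\lambda$ to the second. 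Summing over $i$ against the bounded overlap yields the displayed inequality.

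The main obstacle, and the only point where the non-homogeneous machinery is really needed, is the pointwise bound $|m_{\widetilde{\kappa B_i}}f|\lesssim(1+\epsilon)\lambda$. One connects $\widetilde{\kappa B_i}$, through a chain of doubling balls, to a doubling ball $B_0$ meeting $X\setminus\Omega_\lambda$ (where $Nf\le\lambda$, hence $|m_{B_0}f|\lesssim\lambda$), estimating each consecutive difference by $K^{(\beta)}_{\cdot,\cdot}M^{\sharp,(\beta)}f(z_i)\le K^{(\beta)}_{\cdot,\cdot}\epsilon\lambda$ and summing via the quasi-additivity of the coefficients $K^{(\beta)}_{B,Q}$ along chains together with $K^{(\beta)}_{B,\widetilde B}\lesssim1$; one must here keep track of these coefficients, which in the present setting involve both $\mu(6^kB)$ and the upper-doubling function $\lambda(x_B,6^kr_B)$ of Definition \ref{def1.2}. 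At the largest scale the chain would otherwise produce an uncontrolled constant, and this is precisely where $\|\mu\|<\infty$ together with the hypothesis $\int_Xf\,d\mu=0$ is used: the mean-zero condition annihilates that top-scale constant and lets the chain terminate. The same mean-zero condition also bounds the threshold $\lambda_0$ by a constant multiple of $\|M^{\sharp,(\beta)}f\|_{L^p(\mu)}\|\mu\|^{-1/p}$, which disposes of the range $0<\lambda\le\lambda_0$ below.

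Finally I would integrate the good-$\lambda$ inequality. Writing $\|Nf\|_{L^p(\mu)}^p=p\int_0^\infty\lambda^{p-1}\mu(\Omega_\lambda)\,d\lambda$, bounding the contribution of $0<\lambda\le C_0\lambda_0$ by $C_0^p\lambda_0^p\|\mu\|\lesssim\|M^{\sharp,(\beta)}f\|_{L^p(\mu)}^p$ and changing variables $\lambda\mapsto C_0\lambda$ in the remaining range, one arrives at
\begin{equation*}
\|Nf\|_{L^p(\mu)}^p\le CC_0^p\epsilon\,\|Nf\|_{L^p(\mu)}^p+CC_0^p\epsilon^{-p}\,\|M^{\sharp,(\beta)}f\|_{L^p(\mu)}^p.
\end{equation*}
Choosing $\epsilon$ so small that $CC_0^p\epsilon<\tfrac12$ and using that $\|Nf\|_{L^p(\mu)}<\infty$ from the reduction to bounded $f$, the first term on the right is absorbed into the left side; passing from the truncated functions back to $f$ by Fatou's lemma then completes the proof.
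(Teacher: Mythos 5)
First, note that the paper does not actually prove this lemma: it is quoted verbatim from \cite{FYY2}, whose proof (following Tolsa \cite{T2} and Bui--Duong \cite{BD}) is indeed a good-$\lambda$ inequality between $N$ and $M^{\sharp,(\beta)}$, so your overall strategy is the right one. Two specific steps in your sketch, however, do not work as stated. The first is the preliminary reduction: truncating $f$ to $L^\infty(\mu)$ is not ``routine'' here, because truncation destroys the hypothesis $\int_X f\,d\mu=0$ (which is essential when $\|\mu\|<\infty$ and which you yourself use to kill the top-scale constant in the chaining), and because $M^{\sharp,(\beta)}$ is not monotone under truncation, so the closing ``Fatou'' passage from the truncated functions back to $f$ has no justification. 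In the actual argument the finiteness needed for the absorption is not obtained by truncation at all: the hypothesis $\inf(1,Nf)\in L^p(\mu)$ guarantees $\mu(\{Nf>\lambda\})<\infty$ for each $\lambda>0$, one integrates the distribution function only up to a finite height $R$ (where everything is finite), absorbs, and then lets $R\to\infty$. As written, your use of the hypothesis $\inf(1,Nf)\in L^p(\mu)$ misidentifies its role and leaves a genuine hole.

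The second gap is the ``far'' part of the good-$\lambda$ estimate. You claim that for a doubling ball $B\ni x$ with $B\not\subset\kappa B_i$ the average is $\lesssim\lambda$ ``by the Whitney property together with the geometric-doubling and $(6,\beta_0)$-doubling constants.'' In a non-doubling setting this is exactly the step that geometry alone cannot give: the Whitney property produces a point $z\in X\setminus\Omega_\lambda$ lying in a fixed dilate $cB$, but to exploit $Nf(z)\le\lambda$ you must average over a \emph{doubling} ball containing $z$, and the doubling hull $\widetilde{cB}$ satisfies no bound on $\mu(\widetilde{cB})/\mu(B)$; the $(6,\beta_0)$-doubling of $B$ controls one dilation step only. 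The estimate is salvageable, but only by invoking the very machinery you reserve for the near-part constant: since $x$ lies in the good set, $M^{\sharp,(\beta)}f(x)\le\epsilon\lambda$, and one writes $m_B|f|\le\beta_0 M^{\sharp,(\beta)}f(x)+|m_Bf|$, then compares $m_Bf$ with $m_{\widetilde{cB}}f$ using the second term of the sharp function together with the standard bound $K^{(\beta)}_{B,\widetilde{cB}}\lesssim1$ for doubling hulls, and finally uses $|m_{\widetilde{cB}}f|\le Nf(z)\le\lambda$. (Equivalently, the proofs in \cite{T2,BD,FYY2} avoid the Whitney far/near dichotomy altogether and instead cover the level set of $N$ by the doubling balls realizing the large averages, applying the weak $(1,1)$ bound for $N$ locally; note also that the weak $(1,1)$ boundedness of $M_{(6)}$, while true, is not among the facts recalled in this paper.) So the skeleton is right, but the two places where the non-homogeneity and the hypotheses $\|\mu\|<\infty$, $\int_Xf\,d\mu=0$, $\inf(1,Nf)\in L^p(\mu)$ actually enter are precisely the places your sketch passes over or justifies incorrectly.
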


\begin{lemma}\cite{BD,T2} $1\leq p<\infty$ and $1<\rho <\infty$, then $b\in RBMO(\mu)$ if
and only if for any ball $B\in X$,
\begin{equation*}
\biggl\{\frac{1}{\mu(\rho
B)}\int_{B}|b_{B}-m_{\tilde{B}}(b)|^{p}d\mu(X)\biggr\}^{1/p}\leq
C||b||_{\ast},
\end{equation*}
and for any two doubling ball $B\subset Q$,
\begin{equation}\label{equ44}
|m_{B}(b)-m_{Q}(b)|\leq C K_{B,Q}||b||_{\ast}.
\end{equation}
\end{lemma}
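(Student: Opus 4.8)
The plan is to read the conclusion as an equivalence with Definition \ref{def1.1}'s companion, Definition 1.7: the second displayed condition, inequality (\ref{equ44}), is precisely condition (\ref{equ12}) in the definition of $RBMO(\mu)$ (with the constant named explicitly as $\|b\|_*$), so it carries no new content in either direction. Hence the whole issue is the equivalence between the $L^1$ oscillation bound (\ref{equ11}) and its $L^p$ analogue in the first display. One half is immediate: if the $L^p$ estimate holds, then by H\"older's inequality with exponents $p$ and $p'$,
\begin{equation*}
\frac{1}{\mu(\rho B)}\int_B |b(x)-m_{\widetilde B}(b)|\,d\mu(x) \le \left(\frac{1}{\mu(\rho B)}\int_B |b(x)-m_{\widetilde B}(b)|^p\,d\mu(x)\right)^{1/p}\left(\frac{\mu(B)}{\mu(\rho B)}\right)^{1/p'},
\end{equation*}
and since $\mu(B)\le \mu(\rho B)$ the last factor is at most $1$, which recovers (\ref{equ11}). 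So the substantive claim is the forward implication: (\ref{equ11}) together with (\ref{equ12}) forces the $L^p$ bound.

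The engine for this implication is a John--Nirenberg inequality for $RBMO(\mu)$ on $(X,d,\mu)$: I would show there exist constants $C_1,C_2>0$, depending only on $\rho$ and the structural constants $N_0,C_\lambda,\beta_0$, such that for every ball $B$ and every $\lambda>0$,
\begin{equation*}
\mu\left(\{x\in B: |b(x)-m_{\widetilde B}(b)|>\lambda\}\right) \le C_1\,\mu(\rho B)\,\exp\!\left(-\frac{C_2\,\lambda}{\|b\|_*}\right).
\end{equation*}
I would establish this distributional decay by a Calder\'on--Zygmund/stopping-time iteration adapted to the non-homogeneous setting. Starting from $B$, at a height that is a fixed multiple of $\|b\|_*$ one selects the maximal stopping subballs on which the oscillation first exceeds the threshold; since doubling balls are plentiful (Definition 1.4, as in Lemma 2.3 of \cite{BD}) these may be taken doubling, and the geometrically doubling property (Definition \ref{def1.1}) supplies the finite-overlap covering needed to sum their measures. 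On each stopping ball the mean of $b$ differs from $m_{\widetilde B}(b)$ by a controlled amount, quantified through the coefficients $K_{B,Q}$ via (\ref{equ12})/(\ref{equ44}); iterating, the excess over the threshold decreases by a fixed proportion at each generation, producing geometric decay in the number of generations and hence the exponential bound.

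Granting the distributional estimate, the $L^p$ bound follows from the layer-cake formula,
\begin{equation*}
\int_B |b(x)-m_{\widetilde B}(b)|^p\,d\mu(x) = p\int_0^\infty \lambda^{p-1}\,\mu\left(\{x\in B:|b(x)-m_{\widetilde B}(b)|>\lambda\}\right)\,d\lambda \le C\,\mu(\rho B)\,\|b\|_*^p,
\end{equation*}
where the last step substitutes the exponential decay and evaluates the resulting Gamma integral; dividing by $\mu(\rho B)$ and taking $p$-th roots gives the claimed inequality. The stated independence from the particular $\rho\in(1,\infty)$ is a separate and minor point: enlarging $\rho$ only decreases the left-hand side, while the equivalence of the $RBMO(\mu)$ norms defined with different dilation parameters (standard in this theory) absorbs any change into the constant $C$.

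The hard part will be making the stopping-time iteration close up in the non-homogeneous setting, where the classical doubling inequality $\mu(2B)\le C\mu(B)$ is \emph{not} available. Every comparison of averages between a ball and its enlargement must instead be routed through a doubling ball $\widetilde B$ and the upper-doubling function $\lambda$, and the discrepancy accumulated across generations must be controlled by the coefficients $K_{B,Q}$ rather than by a raw doubling constant. Keeping these coefficients uniformly bounded along the stopping chain, so that the per-generation geometric decay genuinely propagates, is where the bulk of the work lies.
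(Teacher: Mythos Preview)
The paper does not supply its own proof of this lemma: it is quoted from \cite{BD,T2} and invoked as a known fact. So there is no in-paper argument against which to compare yours.

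That said, your outline is the correct and standard route, and it is essentially the argument found in those references. The $L^p$-to-$L^1$ direction via H\"older is immediate as you say, and the substantive direction is indeed obtained from a John--Nirenberg inequality for $RBMO(\mu)$, whose proof in the non-doubling (Tolsa) and non-homogeneous metric (Bui--Duong) settings proceeds by a Calder\'on--Zygmund stopping-time iteration with doubling balls, exactly along the lines you sketch. Your identification of the main technical difficulty---controlling the accumulated discrepancy across generations through the coefficients $K_{B,Q}$ rather than through a doubling constant---is accurate. One small correction: you refer to ``Definition \ref{def1.1}'s companion''; Definition 1.1 in this paper is the geometrically doubling condition, not the $RBMO$ definition (that is Definition 1.8). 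Otherwise there is nothing to flag.
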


\begin{lemma}\cite{HMY1}
\begin{equation*}
|m_{\widetilde{6^{j}\frac{6}{5}B}}(b)-m_{\tilde{B}}(b)|\leq
Cj||b||_{\ast}.
\end{equation*}
\end{lemma}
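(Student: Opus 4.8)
The plan is to compare $m_{\widetilde{6^{j}\frac{6}{5}B}}(b)$ with $m_{\tilde{B}}(b)$ by inserting the means over the intermediate balls $6^{j}\frac{6}{5}B$ and $B$, and then to bound the three resulting differences by $RBMO(\mu)$ oscillation estimates. That is, one starts from
\begin{equation*}
m_{\widetilde{6^{j}\frac{6}{5}B}}(b)-m_{\tilde{B}}(b)=\Bigl[m_{\widetilde{6^{j}\frac{6}{5}B}}(b)-m_{6^{j}\frac{6}{5}B}(b)\Bigr]+\Bigl[m_{6^{j}\frac{6}{5}B}(b)-m_{B}(b)\Bigr]+\Bigl[m_{B}(b)-m_{\tilde{B}}(b)\Bigr]
\end{equation*}
and applies the standard $RBMO(\mu)$ comparison $|m_{B_{1}}(b)-m_{B_{2}}(b)|\le C||b||_{\ast}K_{B_{1},B_{2}}$, valid for arbitrary balls $B_{1}\subset B_{2}$ (the doubling case being (\ref{equ44}); see \cite{BD}). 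The problem is thereby reduced to estimating the coefficients $K_{B,6^{j}\frac{6}{5}B}$, $K_{B,\tilde{B}}$ and $K_{6^{j}\frac{6}{5}B,\widetilde{6^{j}\frac{6}{5}B}}$.

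The first is handled by a trivial remark: every summand in the definition of $K_{B,Q}$ is at most $1$, because the upper doubling condition (Definition \ref{def1.2}) gives $\mu(6^{k}B)\le\lambda(x_{B},6^{k}r_{B})$; since $N_{B,6^{j}\frac{6}{5}B}=j+1$, this yields $K_{B,6^{j}\frac{6}{5}B}\le j+2$. For the other two coefficients the crux is the uniform bound: there is $C_{0}$, depending only on $C_{\lambda}$ and $\beta_{0}$, with $K_{Q,\tilde{Q}}\le C_{0}$ for every ball $Q$. To see this, write $\tilde{Q}=6^{l}Q$ with $l$ minimal; then $6^{i}Q$ fails to be $(6,\beta_{0})$-doubling for $0\le i<l$, so $\mu(6^{i+1}Q)>\beta_{0}\mu(6^{i}Q)$ and hence $\mu(6^{i}Q)\le\beta_{0}^{-(l-i)}\mu(6^{l}Q)$; combining this with $\mu(6^{l}Q)\le\lambda(x_{Q},6^{l}r_{Q})$ and $\lambda(x_{Q},6^{l}r_{Q})\le C_{\lambda}^{\,3(l-i)+1}\lambda(x_{Q},6^{i}r_{Q})$ (both consequences of Definition \ref{def1.2}) gives
\begin{equation*}
\frac{\mu(6^{i}Q)}{\lambda(x_{Q},6^{i}r_{Q})}\le C_{\lambda}\Bigl(\frac{C_{\lambda}^{3}}{\beta_{0}}\Bigr)^{l-i}\qquad(0\le i\le l).
\end{equation*}
As $\beta_{0}>C_{\lambda}^{3\log_{2}6}\ge C_{\lambda}^{3}$, the ratio $C_{\lambda}^{3}/\beta_{0}$ is less than $1$, and summing the geometric series gives $K_{Q,\tilde{Q}}\le 1+C_{\lambda}(1-C_{\lambda}^{3}/\beta_{0})^{-1}=:C_{0}$.

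Combining the three estimates, $|m_{\widetilde{6^{j}\frac{6}{5}B}}(b)-m_{\tilde{B}}(b)|\le C(j+2+2C_{0})||b||_{\ast}\le Cj||b||_{\ast}$ for $j\ge 1$, as desired. The main obstacle is precisely the uniform estimate $K_{Q,\tilde{Q}}\le C_{0}$: this is where the calibration of the doubling parameter $\beta_{0}$ against the upper doubling constant $C_{\lambda}$ enters, forcing one to control how $\mu$ and $\lambda$ compare across the non-doubling annuli $6^{i+1}Q\setminus 6^{i}Q$. A minor point is to verify that the oscillation comparison $|m_{B_{1}}(b)-m_{B_{2}}(b)|\le C||b||_{\ast}K_{B_{1},B_{2}}$ applies to the possibly non-doubling balls $B$ and $6^{j}\frac{6}{5}B$ occurring above; the doubling case is (\ref{equ44}), and the general case follows from the definition of $RBMO(\mu)$ together with the hull bound just proved.
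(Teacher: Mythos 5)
Your reduction to estimates on the coefficients $K$ is the right skeleton, and the two $K$-estimates themselves are fine: each summand in $K_{B,Q}$ is at most $1$ by upper doubling, so $K_{B,6^{j}\frac{6}{5}B}\le j+2$, and your geometric-series argument for $K_{Q,\tilde Q}\le C_{0}$ (using that every intermediate ball $6^{i}Q$, $i<l$, fails to be $(6,\beta_{0})$-doubling, together with $\beta_{0}>C_{\lambda}^{3\log_{2}6}$) is exactly the standard mechanism. The gap is in the bridge from these $K$-bounds to bounds on differences of means. The comparison $|m_{B_{1}}(b)-m_{B_{2}}(b)|\le C\|b\|_{\ast}K_{B_{1},B_{2}}$ is part of the $RBMO(\mu)$ structure only for \emph{doubling} balls (that is (\ref{equ12})/(\ref{equ44})); it is false for general balls, and your decomposition passes precisely through means over the possibly non-doubling balls $B$ and $6^{j}\frac{6}{5}B$. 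For a general ball the definition only gives $\frac{1}{\mu(6B)}\int_{B}|b-m_{\tilde B}(b)|\,d\mu\le C\|b\|_{\ast}$, hence $|m_{B}(b)-m_{\tilde B}(b)|\le C\|b\|_{\ast}\,\mu(6B)/\mu(B)$, and the factor $\mu(6B)/\mu(B)$ is exactly what is unbounded on non-doubling balls; your "hull bound" $K_{B,\tilde B}\le C_{0}$ controls the coefficient, not the mean difference, so the claimed "general case follows from the definition together with the hull bound" does not go through. A concrete failure: on $(\mathbb{R},|\cdot|)$ with $\mu$ the restriction of Lebesgue measure to $[0,1]$ and $b(x)=\log\frac{1}{1-x}\chi_{[0,1)}(x)\in RBMO(\mu)$, thin balls $B$ that barely clip $[0,1]$ near the point $1$ have $m_{B}(b)$ arbitrarily large while $m_{\tilde B}(b)$ and $K_{B,\tilde B}$ stay bounded; so your first and third brackets cannot be estimated by $C\|b\|_{\ast}$.

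The repair (and the route taken in the cited source [HMY1]; the paper itself gives no proof, it only quotes the lemma) is to never take means over the non-doubling balls themselves. The balls $\tilde B$ and $\widetilde{6^{j}\frac{6}{5}B}$ are concentric dilations of $B$, hence one contains the other, and both are doubling; so apply (\ref{equ44}) directly to this pair and bound the corresponding coefficient by $Cj$ using the standard properties of $K$: for $B\subset Q\subset R$ one has $K_{Q,R}\le CK_{B,R}$ and $K_{B,R}\le C(K_{B,Q}+K_{Q,R})$, which combined with your two estimates give, e.g., $K_{\tilde B,\widetilde{6^{j}\frac{6}{5}B}}\le CK_{B,\widetilde{6^{j}\frac{6}{5}B}}\le C\bigl(K_{B,6^{j}\frac{6}{5}B}+K_{6^{j}\frac{6}{5}B,\widetilde{6^{j}\frac{6}{5}B}}\bigr)\le Cj$ (and a symmetric argument, again via the non-doubling geometric series, in the case $\widetilde{6^{j}\frac{6}{5}B}\subset\tilde B$). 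With that modification your $K$-computations do yield the lemma.
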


\begin{lemma}
Suppose  $0<\alpha<2$, $1<p_{1},\ p_{2},\ q<\infty$,\ $1<r<q$ and $b_{1},\ b_{2}\in RBMO(\mu)$. If $I_\beta$ is bounded from $L^{r}$ into $L^{s}$ for all $r\in(1,\,1/\beta)$ and
$1/s=1/r-\beta$ with $0<\beta<1$, then there exists a constant $C>0$ such that
for any $x\in X$, $f_{1}\in L^{p_{1}}(\mu)$ and $f_{2}\in
L^{p_{2}}(\mu)$,
\begin{eqnarray}\label{equ30}
&&M^{\sharp,(\alpha/2)}[b_{1},b_{2},I_{\alpha,2}](f_{1},f_{2})(x)\\
&&\leq C\big\{||b_{1}||_{\ast}||b_{2}||_{\ast}M_{r,(6)}(I_{\alpha,2}(f_{1},f_{2}))(x)
+||b_{1}||_{\ast}M_{r,(6)}([b_{2},I_{\alpha,2}](f_{1},f_{2}))(x)\nonumber\\
&&+||b_{2}||_{\ast}M_{r,(6)}([b_{1},I_{\alpha,2}](f_{1},f_{2}))(x)
+||b_{1}||_{\ast}||b_{2}||_{\ast}M^{(\alpha/2)}_{p_{1},(5)}f_{1}(x)M_{p_{2},(5)}f_{2}(x)\big\},\nonumber
\end{eqnarray}

\begin{eqnarray}\label{equ31}
&&M^{\sharp,(\alpha/2)}[b_{1},I_{\alpha,2}](f_{1},f_{2})(x)]\\
&&\leq C\big\{||b_{1}||_{\ast}M_{r,(6)}(I_{\alpha,2}(f_{1},f_{2}))(x)
+||b_{1}||_{\ast}M^{(\alpha/2)}_{p_{1},(5)}f_{1}(x)M^{(\alpha/2)}_{p_{2},(5)}f_{2}(x)\big\},\nonumber
\end{eqnarray}
and
\begin{eqnarray}\label{equ32}
&&M^{\sharp,(\alpha/2)}[b_{2},I_{\alpha,2}](f_{1},f_{2})(x)\\
&&\leq
C\big\{||b_{2}||_{\ast}M_{r,(6)}(I_{\alpha,2}(f_{1},f_{2}))(x)
+||b_{2}||_{\ast}M^{(\alpha/2)}_{p_{1},(5)}f_{1}(x)M^{(\alpha/2)}_{p_{2},(5)}f_{2}(x)\big\}.\nonumber
\end{eqnarray}
\end{lemma}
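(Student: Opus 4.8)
The plan is to estimate the sharp maximal operator $M^{\sharp,(\alpha/2)}$ applied to the commutator by controlling, for a fixed ball $B\ni x$, the two quantities in the definition: the oscillation-type average $\frac1{\mu(6B)}\int_B |[b_1,b_2,I_{\alpha,2}](f_1,f_2)(y)-h_B|\,d\mu(y)$ for a suitable constant $h_B$, and the difference $|m_B(\cdot)-m_Q(\cdot)|/K^{(\alpha/2)}_{B,Q}$ for doubling balls $B\subset Q$. The crucial algebraic device is to absorb the $RBMO$ functions into the argument: writing $b_i = (b_i - b_{i,\widetilde B})$ for the constant $b_{i,\widetilde B}=m_{\widetilde B}(b_i)$, one expands
\begin{equation*}
[b_1,b_2,I_{\alpha,2}](f_1,f_2)(y) = (b_1(y)-b_{1,\widetilde B})(b_2(y)-b_{2,\widetilde B})I_{\alpha,2}(f_1,f_2)(y) - (b_1(y)-b_{1,\widetilde B})[b_2,I_{\alpha,2}](f_1,f_2)(y) - (b_2(y)-b_{2,\widetilde B})[b_1,I_{\alpha,2}](f_1,f_2)(y) + I_{\alpha,2}\big((b_1-b_{1,\widetilde B})f_1,(b_2-b_{2,\widetilde B})f_2\big)(y),
\end{equation*}
so that the first three terms, after applying H\"older with exponents tuned to leave room, produce factors $\|b_i\|_\ast$ times $M_{r,(6)}$ of $I_{\alpha,2}(f_1,f_2)$ or of the first-order commutators, exactly as on the right-hand side of \eqref{equ30}; the norm estimates here use Lemma 2.2 (the $L^p$ version of $RBMO$) and Lemma 2.3 to handle the passage between $b_{i,\widetilde B}$ and averages over dilated balls.

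Second, I would treat the remaining ``genuinely new'' term $I_{\alpha,2}\big((b_1-b_{1,\widetilde B})f_1,(b_2-b_{2,\widetilde B})f_2\big)$ by the standard splitting of each $f_i$ into an inner part $f_i^0 = f_i\chi_{6/5\,B}$ and an outer part $f_i^\infty = f_i - f_i^0$, giving four cross terms. The term with both functions localized near $B$ is handled by the hypothesized $L^r\to L^s$ boundedness of $I_\beta$ (hence the product bound for $I_{\alpha,2}$ as in the proof of Theorem \ref{thm-main1.1}), combined with H\"older on $6B$, and it yields $\|b_1\|_\ast\|b_2\|_\ast M^{(\alpha/2)}_{p_1,(5)}f_1(x)\,M^{(\alpha/2)}_{p_2,(5)}f_2(x)$ after using the $RBMO$ estimates for $\int_{6^k B}|b_i-b_{i,\widetilde B}|^{p_i'}$. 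The three mixed terms, where at least one $f_i^\infty$ appears, are estimated using the kernel size condition \eqref{equ6}: one exploits that $d(x,y_j)\gtrsim r_B$ on the support, sums the geometric-type series $\sum_k [\mu(6^kB)/\lambda(x_B,6^kr_B)]$ against $6^{-k\epsilon}$-type gains coming from the growth $\lambda(x,ar)\ge a^m\lambda(x,r)$, and the $K$-coefficients $K^{(\alpha/2)}_{\cdot,\cdot}$ absorb the logarithmic factors $j\|b_i\|_\ast$ produced by Lemma 2.3; this is where \eqref{equ12} and the definition of $K^{(\gamma)}_{B,Q}$ enter to control the second supremum in $M^{\sharp,(\alpha/2)}$.

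For the inequalities \eqref{equ31} and \eqref{equ32} the scheme is identical but one level simpler: expand $[b_1,I_{\alpha,2}](f_1,f_2)(y) = (b_1(y)-b_{1,\widetilde B})I_{\alpha,2}(f_1,f_2)(y) - I_{\alpha,2}((b_1-b_{1,\widetilde B})f_1,f_2)(y)$, estimate the first term by $\|b_1\|_\ast M_{r,(6)}(I_{\alpha,2}(f_1,f_2))(x)$ via H\"older and Lemma 2.2, and repeat the $f_i^0/f_i^\infty$ decomposition on the second term, which now carries only one $RBMO$ factor. I expect the main obstacle to be the mixed ``outer'' terms in the new term of \eqref{equ30}: there one must simultaneously (a) exploit the regularity conditions \eqref{equ7}–\eqref{equ8} of the kernel to compare $K(y,\cdot)$ with $K(z,\cdot)$ for $y,z\in B$, (b) control $\int_{6^k B}|b_i(y_i)-b_{i,\widetilde B}|$-type integrals with the linearly growing constants $Cj\|b_i\|_\ast$ from Lemma 2.3, and (c) show that the resulting double sum over the scales of $f_1^\infty$ and $f_2^\infty$ still converges and is dominated by $K^{(\alpha/2)}_{B,Q}$ times the claimed maximal functions — getting the bookkeeping of these interlocking geometric series and logarithmic losses to close is the delicate part, while everything else is a routine adaptation of the known $m=1$ arguments in \cite{FYY2}.
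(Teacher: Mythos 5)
Your proposal is correct in outline and follows essentially the same route as the paper's proof: the same expansion of $[b_{1},b_{2},I_{\alpha,2}](f_{1},f_{2})$ around the constants $m_{\widetilde{B}}(b_{i})$ (which is what produces the $M_{r,(6)}$ terms for $I_{\alpha,2}$ and for the first-order commutators), the same splitting $f_{i}=f_{i}\chi_{\frac{6}{5}B}+f_{i}\chi_{X\setminus\frac{6}{5}B}$ with the inner--inner piece handled by the multilinear boundedness from Theorem \ref{thm-main1.1} plus H\"older and Lemma 2.2, and the comparison of the constants $h_{B}$, $h_{Q}$ over doubling balls handled via Lemmas 2.2--2.3 and the coefficients $K_{B,Q}$, $K^{(\alpha/2)}_{B,Q}$, just as in the paper's $E$-, $F$- and $H$-term estimates. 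The only caveat is your mid-proposal claim that every term containing an outer factor is handled by the size condition \eqref{equ6} alone: that works for the mixed terms but not for the outer--outer term, where the kernel decay must be split between the two variables and the series of $k\|b_{i}\|_{\ast}$ factors would diverge without the extra $6^{-k\delta_{i}}$ decay coming from the regularity conditions \eqref{equ7}--\eqref{equ8}; your final paragraph corrects this and invokes exactly the comparison of kernels at points of $B$ that the paper uses for $E_{44}$ and $F_{1}$.
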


\begin{proof}
As $L^{\infty}(\mu)$ with compact support is dense in
$L^{p}(\mu)$ for $1<p<\infty$, we only consider
$f_{1},\ f_{2}\in L^{\infty}(\mu)$ with compact support. Also, by
Corollary 3.11 in [4], without loss of generality, we assume
$b_{1},\ b_{2}\in L^{\infty}(\mu)$.

 As Theorem 9.1 in \cite{T2}, in order to obtain (\ref{equ30}), it suffices to show that
\begin{eqnarray}\label{equ33}
&&\frac{1}{\mu(6B)}\int_{B}||[b_{1},b_{2},
I_{\alpha,2}](f_{1},f_{2})(z)-h_{B}||d\mu(z)\\
&&\leq C\big\{||b_{1}||_{\ast}||b_{2}||_{\ast}M_{r,(6)}(I_{\alpha,2}(f_{1},f_{2}))(x)
+||b_{1}||_{\ast}M_{r,(6)}([b_{2},I_{\alpha,2}](f_{1},f_{2}))(x)\nonumber\\
&&+||b_{2}||_{\ast}M_{r,(6)}([b_{1},I_{\alpha,2}](f_{1},f_{2}))(x)+C||b_{1}||_{\ast}||b_{2}||_{\ast}M^{(\alpha/2)}_{p_{1},(5)}f_{1}(x)M^{(\alpha/2)}_{p_{2},(5)}f_{2}(x)\big\},\nonumber
\end{eqnarray}
 holds for
any $x\in B$, and
\begin{eqnarray}\label{equ34}
&&|h_{B}-h_{Q}|\\
&& \leq CK_{B,Q}^{2}K_{B,Q}^{(\alpha/2)}\biggr[||b_{1}||_{\ast}||b_{2}||_{\ast}M_{r,(6)}(I_{\alpha,2}(f_{1},f_{2}))(x)\nonumber\\
&&+||b_{1}||_{\ast}||b_{2}||_{\ast}M^{(\alpha/2)}_{p_{1},(5)}f_{1}(x)M^{(\alpha/2)}_{p_{2},(5)}f_{2}(x)\nonumber\\
&&+||b_{1}||_{\ast}M_{r,(6)}([b_{2},I_{\alpha,2}](f_{1},f_{2}))(x)
+||b_{2}||_{\ast}M_{r,(6)}([b_{1},I_{\alpha,2}](f_{1},f_{2}))(x)\biggr].\nonumber
\end{eqnarray}
for any ball $B\subset Q$ with $x\in B$, where $Q$ is a doubling ball. For any ball $B$, denote
$$h_{B}:= m_{B}(I_{\alpha,2}((b_{1}-
m_{\tilde{B}}(b_{1}))f_{1}\chi_{X\backslash\frac{6}{5}B},(b_{2}-m_{\tilde{B}}(b_{2}))f_{2}\chi_{X\backslash\frac{6}{5}B})),$$
and
$$h_{Q}:= m_{Q}(I_{\alpha,2}((b_{1}-
m_{Q}(b_{1}))f_{1}\chi_{X\backslash\frac{6}{5}Q},(b_{2}-m_{Q}(b_{2}))f_{2}\chi_{X\backslash\frac{6}{5}Q})).$$

Since
$$[b_{1},b_{2},I_{\alpha,2}]=I_{\alpha,2}((b_{1}-b_{1}(z))f_{1},(b_{2}-b_{2}(z))f_{2}),$$
and
\begin{eqnarray}
&&I_{\alpha,2}((b_{1}-m_{\tilde{B}}(b_{1}))f_{1},(b_{2}-m_{\tilde{B}}(b_{2}))f_{2})\\
&&=I_{\alpha,2}((b_{1}-b_{1}(z)+b_{1}(z)-m_{\tilde{B}}(b_{1}))f_{1},(b_{2}-b_{2}(z)+b_{2}(z)-m_{\tilde{B}}(b_{2}))f_{2})\nonumber\\
&&=(b_{1}(z)-m_{\tilde{B}}(b_{1}))(b_{2}(z)-m_{\tilde{B}}(b_{2}))I_{\alpha,2}(f_{1},f_{2})\nonumber\\
&&-(b_{1}(z)-m_{\tilde{B}}(b_{1}))I_{\alpha,2}(f_{1},(b_{2}-b_{2}(z))f_{2})\nonumber\\
&&-(b_{2}(z)-m_{\tilde{B}}(b_{2}))I_{\alpha,2}((b_{1}-b_{1}(z))f_{1},f_{2})\nonumber\\
&&+I_{\alpha,2}((b_{1}-b_{1}(z))f_{1},(b_{2}-b_{2}(z))f_{2}),\nonumber
\end{eqnarray}
it follows that
\begin{eqnarray}\label{equ36}
&&\biggl(\frac{1}{\mu(6B)}\int_{B}|[b_{1},b_{2},I_{\alpha,2}](f_{1},f_{2})(z)-h_{B}|d\mu(z)\biggr)\nonumber\\
&&\leq C\biggl(\frac{1}{\mu(6B)}\int_{B}|(b_{1}(z)-m_{\tilde{B}}(b_{1}))(b_{2}(z)-m_{\tilde{B}}(b_{2}))I_{\alpha,2}(f_{1},f_{2})(z)|d\mu(z)\biggr)\nonumber\\
&&+C\biggl(\frac{1}{\mu(6B)}\int_{B}|(b_{1}(z)-m_{\tilde{B}}(b_{1}))I_{\alpha,2}(f_{1},(b_{2}-b_{2}(z))f_{2})(z)|d\mu(z)\biggr)\nonumber\\
&&+C\biggl(\frac{1}{\mu(6B)}\int_{B}|(b_{2}(z)-m_{\tilde{B}}(b_{2}))I_{\alpha,2}((b_{1}-b_{1}(z))f_{1},f_{2})(z)|d\mu(z)\biggr)\nonumber\\
&&+C\biggl(\frac{1}{\mu(6B)}\int_{B}|I_{\alpha,2}((b_{1}-m_{\tilde{B}}(b_{1}))f_{1},(b_{2}-m_{\tilde{B}}(b_{2}))f_{2})(z)-h_{B}|d\mu(z)\biggr)\nonumber\\
&&=:E_{1}+E_{2}+E_{3}+E_{4}.
\end{eqnarray}
For $E_{1}$, let $1<r_{1},\ r_{2}$ such that
$\dfrac{1}{r}+\dfrac{1}{r_{1}}+\dfrac{1}{r_{2}}=1$.
It follows from H\"older's inequality that

\begin{eqnarray*}
&&E_{1}\\
&&\leq
C\biggl(\frac{1}{\mu(6B)}\int_{B}|b_{1}(z)-m_{\tilde{B}}b_{1}|^{r_{1}}d\mu(z)\biggr)^{1/r_{1}}\\
&&\quad\times\biggl(\frac{1}{\mu(6B)}\int_{B}|b_{2}(z)-m_{\tilde{B}}b_{2}|^{r_{2}}d\mu(z)\biggr)^{1/r_{2}}\\
&&\quad\times\biggl(\frac{1}{\mu(6B)}\int_{B}|I_{\alpha,2}(f_{1},f_{2})|^{r}d\mu(z)\biggr)^{1/r}\\
&&\leq C||b_{1}||_{\ast}||b_{2}||_{\ast}M_{r,(6)}(I_{\alpha,2}(f_{1},f_{2}))(x).
\end{eqnarray*}

For $E_{2}$, let $1<s$ such that
$\dfrac{1}{s}+\dfrac{1}{r}=1$, by H\"{o}lder's
inequality, one deduces

\begin{eqnarray*}
&&E_{2}\\
&&\leq C\biggl(\frac{1}{\mu(6B)}\int_{B}|b_{1}(z)-m_{\tilde{B}}b_{1}|^{s}d\mu(z)\biggr)^{1/s}\\
&&\  \ \times\biggl(\frac{1}{\mu(6B)}\int_{B}|[b_{2},I_{\alpha,2}](f_{1},f_{2})|^{r}d\mu(z)\biggr)^{1/r}\\
&&\leq C||b_{1}||_{\ast}M_{r,(6)}([b_{2},I_{\alpha,2}](f_{1},f_{2}))(x).
\end{eqnarray*}

For $E_3$, in a similar way we can obtain
\begin{equation*}
E_{3}\leq C||b_{2}||_{\ast}M_{r,(6)}([b_{1},I_{\alpha,2}](f_{1},f_{2}))(x).
\end{equation*}

For $E_{4}$, let
$f_{k}^{1}=f_{k}\chi_{\frac{6}{5}B}$ and $f_{k}^{2}=f_{k}-f_{k}^{1}$
for $k=1,2$. Then
\begin{eqnarray*}
&&E_{4}\\
&&\leq
C\biggl(\frac{1}{\mu(6B)}\int_{B}|I_{\alpha,2}((b_{1}-m_{\tilde{B}}b_{1})f_{1}^{1}(z),(b_{2}-m_{\tilde{B}}b_{2})f_{2}^{1})(z)|d\mu(z)\biggr)\\
&&\  +
C\biggl(\frac{1}{\mu(6B)}\int_{B}|I_{\alpha,2}((b_{1}-m_{\tilde{B}}b_{1})f_{1}^{1}(z),(b_{2}-m_{\tilde{B}}b_{2})f_{2}^{2})(z)|d\mu(z)\biggr)\\
&&\  +
C\biggl(\frac{1}{\mu(6B)}\int_{B}|I_{\alpha,2}((b_{1}-m_{\tilde{B}}b_{1})f_{1}^{2}(z),(b_{2}-m_{\tilde{B}}b_{2})f_{2}^{1})(z)|d\mu(z)\biggr)\\
&&\  +
C\biggl(\frac{1}{\mu(6B)}\int_{B}|I_{\alpha,2}((b_{1}-m_{\tilde{B}}b_{1})f_{1}^{2}(z),(b_{2}-m_{\tilde{B}}b_{2})f_{2}^{2})(z)-h_{B}|d\mu(z)\biggr)\\
&&=:E_{41}+E_{42}+E_{43}+E_{44}.
\end{eqnarray*}

For $1<p_i<\infty,$ $ i=1,2$, set $s_1=\sqrt{p_1}$, $s_2=\sqrt{p_2}$, $\dfrac{1}{v}=\dfrac{1}{s_1}+\dfrac{1}{s_2}-\alpha$, $\dfrac{1}{s_1}=\dfrac{1}{p_1}+\dfrac{1}{v_1}$
and $\dfrac{1}{s_2}=\dfrac{1}{p_2}+\dfrac{1}{v_2}$. It follows from H\"older's inequality and Theorem \ref{thm-main1.1} that
\begin{eqnarray*}
&&E_{41}\\
&&\leq
C\frac{\mu(B)^{1-1/v}}{\mu(6B)}||I_{\alpha,2}((b_{1}-m_{\tilde{B}}b_{1})f_{1}^{1},(b_{2}-m_{\tilde{B}}b_{2})f_{2}^{1})||_{L^{v}(\mu)}\\
&&\leq
C\frac{1}{\mu(6B)^{1/v}}||(b_{1}-m_{\tilde{B}}b_{1})f_{1}^{1}||
_{L^{s_1}(\mu)}||(b_{2}-m_{\tilde{B}}b_{2})f_{2}^{1}||_{L^{s_2}(\mu)}\\
&&\leq
C\frac{1}{\mu(6B)^{1/v}}(\int_{\frac{6}{5}B}|(b_{1}-m_{\tilde{B}}b_{1}|^{v_{1}}d\mu(z))^{1/v_{1}}(\int_{\frac{6}{5}B}|f_{1}(z)|^{p_{1}}d\mu(z))^{1/p_{1}}\\
&&\ \ \times(\int_{\frac{6}{5}B}|(b_{2}-m_{\tilde{Q}}b_{2})|^{v_{2}}d\mu(z))^{1/v_{2}}(\int_{\frac{6}{5}B}|f_{2}(z)|^{p_{2}}d\mu(z))^{1/p_{2}}\\
&&\leq
C\prod_{i=1}^{2}\bigg(\frac{\int_{\frac{6}{5}B}|b_{i}-m_{\tilde{B}}b_{i}|^{v_{i}}d\mu(z)}{\mu(6B)}\bigg)^{1/v_{i}}\bigg(\frac{\int_{\frac{6}{5}B}|f_{i}(z)|^{p_{i}}d\mu(z)}{\mu(6B)^{1-\alpha p_{i}/2}}\bigg)^{1/p_{i}}\\
&&\leq
C||b_{1}||_{\ast}||b_{2}||_{\ast}M^{(\alpha/2)}_{p_{1},(5)}f_{1}(x)M^{(\alpha/2)}_{p_{2},(5)}f_{2}(x).
\end{eqnarray*}
For $E_{42}$, using (i) of Definition 1.5, Lemma 2.2, Lemma
2.3, H\"{o}lder's inequality and the condition of $\lambda(x, ar) \geq a^{m}\lambda(x, r)$, we
have
\begin{eqnarray*}
&&E_{42}\\
&&\leq
C\frac{1}{\mu(6B)}\int_{B}\int_{X}\int_{X}
\frac{|b_{1}(y_{1})-m_{\tilde{B}}b_{1}||f_{1}^{1}(y_{1})|}
{[\lambda(z,d(z,y_{1}))+\lambda(z,d(z,y_{2}))]^{2-\alpha}}\\
&&\ \ \ \ \times|b_{2}(y_{2})-m_{\tilde{B}}b_{2}||f_{2}^{2}(y_{2})|d\mu(y_{1})d\mu(y_{2})d\mu(z)\\
&&\leq C\frac{1}{\mu(6B)}\int_{B}\int_{\frac{6}{5}B}
|b_{1}(y_{1})-m_{\tilde{B}}b_{1}||f_{1}(y_{1})|d\mu(y_{1})\\
&&\ \ \ \times\int_{X\backslash\frac{6}{5}B}
\frac{|b_{2}(y_{2})-m_{\tilde{B}}b_{2}||f_{2}(y_{2})|d\mu(y_{2})}{[\lambda(z,d(z,y_{2}))]^{2-\alpha}}d\mu(z)\\
&&\leq
C(\frac{1}{\mu(6B)}\int_{\frac{6}{5}B}|b_{1}(y_{1})-m_{\tilde{B}}b_{1}|^{p'_{1}}d\mu(y_{1}))^{1/p'_{1}}\\
&&\ \ \ \times(\frac{1}{\mu(6B)^{1-\alpha p_{1}/2}}\int_{\frac{6}{5}B}|f_{1}(y_{1})|^{p_{1}}d\mu(y_{1}))^{1/p_{1}}\\
&&\ \ \ \times
\mu(6B)^{-\alpha/2}\mu(B)\sum_{k=1}^{\infty}\int_{6^{k}\frac{6}{5}B\backslash6^{k-1}\frac{6}{5}B}\frac{|b_{2}(y_{2})-m_{\tilde{B}}
b_{2}||f_{2}(y_{2})|}{[\lambda(x,6^{k-1}\frac{6}{5}r_{B})]^{2-\alpha}}d\mu(y_{2})\\
&&\leq
C||b_{1}||_{\ast}M^{(\alpha/2)}_{p_{1},(5)}f_{1}(x)\sum_{k=1}^{\infty}6^{-km(1-\alpha/2)}
      [\frac{\mu(B)}{\mu(\frac{6}{5}B)}]^{1-\alpha/2}[\frac{\mu(\frac{6}{5}B)}{\lambda(x,\frac{6}{5}r_{B})}]^{1-\alpha/2}\\
&&\ \ \ \times\frac{1}{[\lambda(x,5\times6^{k}\frac{6}{5}r_{B})]^{1-\alpha/2}}
\int_{6^{k}\frac{6}{5}B}|b_{2}(y_{2})-m_{\tilde{B}}
b_{2}||f_{2}(y_{2})|d\mu(y_{2})\\
&&\leq
C||b_{1}||_{\ast}M^{(\alpha/2)}_{p_{1},(5)}f_{1}(x)\sum_{k=1}^{\infty}6^{-km(1-\alpha/2)}\frac{1}{[\mu(5\times6^{k}\frac{6}{5}B)]^{1-\alpha/2}}\\
&&\ \ \ \times\int_{6^{k}\frac{6}{5}B}
|b_{2}(y_{2})-m_{\widetilde{6^{k}\frac{6}{5}B}}(b_{2})+m_{\widetilde{6^{k}\frac{6}{5}B}}(b_{2})-m_{\tilde{B}}b_{2}||f_{2}(y_{2})|d\mu(y_{2})\\
&&\leq
C||b_{1}||_{\ast}M^{(\alpha/2)}_{p_{1},(5)}f_{1}(x)\sum_{k=1}^{\infty}6^{-km(1-\alpha/2)}\\
&&\ \ \ \times\biggl[\biggl(\frac{1}{\mu(5\times6^{k}\frac{6}{5}B)}\int_{6^{k}\frac{6}{5}B}|b_{2}(y_{2})-m_{\widetilde{6^{k}\frac{6}{5}B}}(b_{2})|^{p'_{2}}d\mu(y_{2})\biggr)^{1/p'_{2}}\\
&&\ \ \ \times
(\frac{1}{\mu(5\times6^{k}\frac{6}{5}B)^{1-\alpha p_{2}/2}}\int_{6^{k}\frac{6}{5}B}|f_{2}(y_{2})|^{p_{2}}d\mu(y_{2}))^{1/p_{2}}\\
&&\ \ \ +C
k||b_{2}||_{\ast}l(\frac{1}{\mu(5\times6^{k}\frac{6}{5}B)^{1-\alpha p_{2}/2}}\int_{6^{k}\frac{6}{5}B}|f_{2}(y_{2})|^{p_{2}}d\mu(y_{2}))^{1/p_{2}}\\
&&\ \ \ \times
\biggl(\frac{1}{\mu(5\times6^{k}\frac{6}{5}B)}\int_{6^{k}\frac{6}{5}B}d\mu(y_{2})\biggr)^{1/p'_{2}}\\
&&\leq
C||b_{1}||_{\ast}||b_{2}||_{\ast}M^{(\alpha/2)}_{p_{1},(5)}f_{1}(x)M^{(\alpha/2)}_{p_{2},(5)}f_{2}(x).\\
\end{eqnarray*}
Similarly, we get
\begin{equation*}
E_{43}\leq
C||b_{1}||_{\ast}||b_{2}||_{\ast}M^{(\alpha/2)}_{p_{1},(5)}f_{1}(x)M^{(\alpha/2)}_{p_{2},(5)}f_{2}(x).
\end{equation*}

For $E_{44}$, by (ii) of Definition 1.5, Lemma 2.2, Lemma 2.3,
H\"{o}lder's inequality and the properties of $\lambda$, we obtain
\begin{eqnarray*}
&&\big|I_{\alpha,2}((b_{1}-m_{\tilde{B}}b_{1})f_{2}^{2},(b_{2}-m_{\tilde{B}}b_{2})f_{2}^{2})(z)\\
&&\ \ \ \ -I_{\alpha,2}((b_{1}-m_{\tilde{B}}b_{1})f_{2}^{2},(b_{2}-m_{\tilde{B}}b_{2})f_{2}^{2})(z_{0})\big|\\
&&\leq C\int_{X\backslash
\frac{6}{5}B}\int_{X\backslash\frac{6}{5}B}|K(z,y_{1},y_{2})-K(z_{0},y_{1},y_{2})|\\
&&\ \ \ \prod_{i=1}^{2}|(b_{i}(y_{i})-m_{\tilde{B}}b_{i})f_{i}(y_{i})|d\mu(y_{i})\\
&&\leq C\int_{X\backslash
\frac{6}{5}B}\int_{X\backslash\frac{6}{5}B}\frac{d(z,z_{0})^{\delta}
\prod_{i=1}^{2}|(b_{i}(y_{i})-m_{\tilde{B}}b_{i})f_{i}(y_{i})|d\mu(y_{i}))}
{(d(z,y_{1})+d(z,y_{2}))^{\delta}[\sum_{j=1}^{2}\lambda(x,d(x,y_{j}))]^{2-\alpha}}\\
&&\leq C\prod_{i=1}^{2}\int_{X\backslash
\frac{6}{5}B}\frac{d(z,z_{0})^{\delta_{i}}
|b_{i}(y_{i})-m_{\tilde{B}}b_{i}||f_{i}(y_{i})|d\mu(y_{i})}{d(z,y_{i})^{\delta_{i}}[\lambda(z,d(z,y_{i}))]^{1-\alpha/2}}\\
&&\leq C\prod_{i=1}^{2}\sum_{k=1}^{\infty}\int_{6^{k}\frac{6}{5}B\backslash6^{k-1}\frac{6}{5}B}6^{-k\delta_{i}}[\frac{\mu(5\times6^{k}\frac{6}{5}B)}
{\lambda(z,5\times6^{k}\frac{6}{5}r_{B})}]^{1-\alpha/2}\frac{1}{[\mu(5\times6^{k}\frac{6}{5}B)]^{1-\alpha/2}}\\
&&\ \ \ \times\big|b_{i}(y_{i})-m_{\tilde{B}}b_{i}\big|\big|f_{i}\big|d\mu(y_{i})\\
&&\leq C\prod_{i=1}^{2}\sum_{k=1}^{\infty}6^{-k\delta_{i}}\bigg(\frac{1}{[\mu(5\times6^{k}\frac{6}{5}B)]^{1-\alpha p_i/2}}
\int_{6^{k}\frac{6}{5}B}|b_{i}(y_{i})-m_{\tilde{B}}b_{i}|^{p'_{i}}d\mu(y_{i})\bigg)^{1/p'_{i}}\\
&&\ \ \ \times\bigg(\frac{1}{\mu(5\times6^{k}\frac{6}{5}B)}
\int_{6^{k}\frac{6}{5}B}|f_{i}|^{p_{i}}\bigg)^{1/p_{i}}\\
&&\leq C\prod_{i=1}^{2}\sum_{k=1}^{\infty}6^{-k\delta_{i}}M_{p_{i},(6)}f_{i}(x)\bigg(\frac{1}{[\mu(5\times6^{k}\frac{6}{5}B)]^{1-\alpha p_i/2}}
\int_{6^{k}\frac{6}{5}B}|b_{i}(y_{i})-m_{\widetilde{6^{k}\frac{6}{5}B}}\\
&&\ \ \ +m_{\widetilde{6^{k}\frac{6}{5}B}}-m_{\tilde{B}}b_{i}|^{p'_{i}}d\mu(y_{i})\bigg)^{1/p'_{i}}\\
&&\leq
C\prod_{i=1}^{2}\sum_{k=1}^{\infty}6^{-k\delta_{i}}k||b_{i}||_{\ast}M_{p_{i},(6)}f_{i}(x)\\
&&\leq C||b_{1}||_{\ast}||b_{2}||_{\ast}M^{(\alpha/2)}_{p_{1},(6)}f_{1}(x)M^{(\alpha/2)}_{p_{2},(6)}f_{2}(x).
\end{eqnarray*}
where $\delta_{1},\delta_{2}>0$ and $\delta_{1}+\delta_{2}=\delta$.

Taking the mean over $z_{0}\in B$, it deduces
\begin{equation}\label{equ45}
E_{44}\leq
C||b_{1}||_{\ast}||b_{2}||_{\ast}M^{(\alpha/2)}_{p_{1},(6)}f_{1}(x)M^{(\alpha/2)}_{p_{2},(6)}f_{2}(x).
\end{equation}
So (\ref{equ33}) can be obtain from (\ref{equ36}) to (\ref{equ45}).

Next we prove (\ref{equ34}). Consider two balls $B\subset Q$ with $x\in B$,
where $B$ is an arbitrary ball and $Q$ is a doubling ball. Let
$N=N_{B,Q}+1$, then we yield

\begin{eqnarray}\label{equ46}
&&\biggl||m_{B}[I_{\alpha,2}((b_{1}-m_{\tilde{B}}b_{1})f_{1}^{2},(b_{2}-m_{\tilde{B}}b_{2})f_{2}^{2})]|\nonumber\\
&&\ \ -|m_{Q}[I_{\alpha,2}((b_{1}-m_{Q}b_{1})f_{1}^{2},(b_{2}-m_{Q}b_{2})f_{2}^{2})]|\biggr|\nonumber\\
&&\leq |m_{B}[I_{\alpha,2}((b_{1}-m_{\tilde{B}}b_{1})f_{1}\chi_{X\backslash6^{N}B},(b_{2}-m_{\tilde{B}}b_{2})f_{2}\chi_{X\backslash6^{N}B})]\nonumber\\
&&\ \ -m_{Q}[I_{\alpha,2}((b_{1}-m_{\tilde{B}}b_{1})f_{1}\chi_{X\backslash6^{N}B},(b_{2}-m_{\tilde{B}}b_{2})f_{2}\chi_{X\backslash6^{N}B})]|\nonumber\\
&&\ \
+|m_{Q}[I_{\alpha,2}((b_{1}-m_{Q}b_{1})f_{1}\chi_{X\backslash6^{N}B},(b_{2}-m_{Q}b_{2})f_{2}\chi_{X\backslash6^{N}B})]\nonumber\\
&&\ \
-m_{Q}[I_{\alpha,2}((b_{1}-m_{\tilde{B}}b_{1})f_{1}\chi_{X\backslash6^{N}B},(b_{2}-m_{\tilde{B}}b_{2})f_{2}\chi_{X\backslash6^{N}B})]|\nonumber\\
&&\ \
+|m_{B}[I_{\alpha,2}((b_{1}-m_{\tilde{B}}b_{1})f_{1}\chi_{6^{N}B\backslash\frac{6}{5}B},(b_{2}-m_{\tilde{B}}b_{2})f_{2}\chi_{X\backslash\frac{6}{5}B})]\nonumber\\
&&\ \
+|m_{B}[I_{\alpha,2}((b_{1}-m_{\tilde{B}}b_{1})f_{1}\chi_{X\backslash\frac{6}{5}B},(b_{2}-m_{\tilde{B}}b_{2})f_{2}\chi_{6^{N}B\backslash\frac{6}{5}B})]\nonumber\\
&&\ \
+|m_{Q}[I_{\alpha,2}((b_{1}-m_{Q}b_{1})f_{1}\chi_{6^{N}B\backslash\frac{6}{5}Q},(b_{2}-m_{Q}b_{2})f_{2}\chi_{X\backslash6^{N}B})]\nonumber\\
&&\ \
+|m_{Q}[I_{\alpha,2}((b_{1}-m_{Q}b_{1})f_{1}\chi_{X\backslash\frac{6}{5}Q},(b_{2}-m_{Q}b_{2})f_{2}\chi_{6^{N}B\backslash\frac{6}{5}Q})]\nonumber\\
&&=:F_{1}+F_{2}+F_{3}+F_{4}+F_{5}+F_{6}.
\end{eqnarray}

Using the method to estimate $E_{44}$, we get
\begin{equation*}
F_{1}\leq
C||b_{1}||_{\ast}||b_{2}||_{\ast}M^{(\alpha/2)}_{p_{1},(6)}f_{1}(x)M^{(\alpha/2)}_{p_{2},(6)}f_{2}(x).
\end{equation*}

Let us estimate $F_{2}$. At first, we calculate

\begin{eqnarray*}
&&I_{\alpha,2}((b_{1}-m_{Q}b_{1})f_{1}\chi_{X\backslash6^{N}B},(b_{2}-m_{Q}b_{2})f_{2}\chi_{X\backslash6^{N}B})(z)\\
&&\ \ -I_{\alpha,2}((b_{1}-m_{\tilde{B}}b_{1})f_{1}\chi_{X\backslash6^{N}B},(b_{2}-m_{\tilde{B}}b_{2})f_{2}\chi_{X\backslash6^{N}B})(z)\\
&&=(m_{Q}b_{2}-m_{\tilde{B}}b_{2})I_{\alpha,2}((b_{1}-m_{Q}b_{1})f_{1}\chi_{X\backslash6^{N}B},f_{2}\chi_{X\backslash6^{N}B})(z)\\
&&\ \ +(m_{Q}b_{1}-m_{\tilde{B}}b_{1})I_{\alpha,2}(f_{1}\chi_{X\backslash6^{N}B},(b_{2}-m_{Q}b_{2})f_{2}\chi_{X\backslash6^{N}B})(z)\\
&&\ \ +(m_{Q}b_{1}-m_{\tilde{B}}b_{1})(m_{Q}b_{2}-m_{\tilde{B}}b_{2})I_{\alpha,2}(f_{1}\chi_{X\backslash6^{N}B},f_{2}\chi_{X\backslash6^{N}B})(z).
\end{eqnarray*}

Hence
\begin{eqnarray*}
&&F_{2}\\
&&\leq|(m_{Q}b_{2}-m_{\tilde{B}}b_{2})\frac{1}{\mu(Q)}\int_{Q}I_{\alpha,2}((b_{1}-m_{Q}b_{1})f_{1}
\chi_{X\backslash6^{N}B},f_{2}\chi_{X\backslash6^{N}B})(z)d\mu(z)|\\
&&+|(m_{Q}b_{1}-m_{\tilde{B}}b_{1})\frac{1}{\mu(Q)}\int_{Q}I_{\alpha,2}((f_{1}\chi_{X\backslash6^{N}B},(b_{2}-m_{Q}b_{2})f_{2}\chi_{X\backslash6^{N}B})(z)d\mu(z)|\\
&&+|(m_{Q}b_{1}-m_{\tilde{B}}b_{1})(m_{Q}b_{2}-m_{\tilde{B}}b_{2})\frac{1}{\mu(Q)}\int_{Q}
I_{\alpha,2}(f_{1}\chi_{X\backslash6^{N}B},f_{2}\chi_{X\backslash6^{N}B})(z)|\\
&&=:F_{21}+F_{22}+F_{23}.
\end{eqnarray*}
To estimate $F_{21}$, we write
\begin{eqnarray*}
&&I_{\alpha,2}((b_{1}-m_{Q}b_{1})f_{1}\chi_{X\backslash6^{N}Q},f_{2}\chi_{X\backslash6^{N}Q})(z)\\
&&=I_{\alpha,2}((b_{1}-m_{Q}b_{1})f_{1},f_{2})(z)-T((b_{1}-m_{Q}b_{1})f_{1}\chi_{6^{N}B}\chi_{\frac{6}{5}Q},f_{2}\chi_{\frac{6}{5}Q})(z))\\
&&\ -I_{\alpha,2}((b_{1}-m_{Q}b_{1})f_{1}\chi_{\frac{6}{5}Q},f_{2}\chi_{6^{N}B}\chi_{\frac{6}{5}Q})(z))\\
&&\ +I_{\alpha,2}((b_{1}-m_{Q}b_{1})f_{1}\chi_{6^{N}B}\chi_{\frac{6}{5}Q},f_{2}\chi_{6^{N}B}\chi_{\frac{6}{5}Q})(z)\\
&&\ -I_{\alpha,2}((b_{1}-m_{Q}b_{1})f_{1}\chi_{X\backslash\frac{6}{5}Q},f_{2}\chi_{6^{N}B})(z)\\
&&\ -I_{\alpha,2}((b_{1}-m_{Q}b_{1})f_{1}\chi_{6^{N}B},f_{2}\chi_{X\backslash\frac{6}{5}Q})(z)\\
&&\ +I_{\alpha,2}((b_{1}-m_{Q}b_{1})f_{1}\chi_{6^{N}B\backslash\frac{6}{5}Q},f_{2}\chi_{6^{N}B\backslash\frac{6}{5}Q})(z)\\
 &&=:H_{1}(z)+H_{2}(z)+H_{3}(z)+H_{4}(z)+H_{5}(z)+H_{6}(z)+H_{7}(z).
\end{eqnarray*}
Let us first estimate $H_{1}(z)$. It is easy to see that
$$\frac{1}{\mu(Q)}\int_{Q}|I_{\alpha,2}(b_{1}-b_{1}(z)f_{1},f_{2})(z)|d\mu(z)\leq CM_{r,(6)}([b_{1},I_{\alpha,2}]f_{1},f_{2})(x).$$
By H\"{o}lder's inequality, we have
$$\frac{1}{\mu(Q)}\int_{Q}|(b_{1}(z)-m_{Q}(b_{1}))I_{\alpha,2}(f_{1},f_{2})(z)|d\mu(z)\leq C||b_{1}||_{\ast}M_{r,(6)}(I_{\alpha,2}(f_{1},f_{2}))(x).$$
Then we obtain
\begin{eqnarray*}
&&|m_{Q}(H_{1})|\\
&\leq&
|m_{Q}(I_{\alpha,2}(b_{1}-b_{1}(z)f_{1},f_{2}))|+|m_{Q}((b_{1}(z)-m_{Q}(b_{1}))I_{\alpha,2}(f_{1},f_{2}))|\\
&\leq&
CM_{r,(6)}([b_{1},I_{\alpha,2}]f_{1},f_{2})(x)+||b_{1}||_{\ast}M_{r,(6)}(I_{\alpha,2}(f_{1},f_{2}))(x).
\end{eqnarray*}
For $H_{2}(z)$,  $s_1=\sqrt{p_1}$, $s_2=p_2$, $\dfrac{1}{v}=\dfrac{1}{s_1}+\dfrac{1}{s_2}-\alpha$ and $\dfrac{1}{s_1}=\dfrac{1}{p_1}+\dfrac{1}{v_1}$. Using the fact
that $Q$ is a doubling balls, Lemma 3.1 and H\"{o}lder's
inequality, we yield

\begin{eqnarray*}
&&|m_{Q}(H_{2})|\\
&\leq&
C\frac{\mu(Q)^{1-1/v}}{\mu(6Q)}||I_{\alpha,2}((b_{1}-m_{Q}b_{1})f_{1}\chi_{6^{N}B}\chi_{6/5Q},f_{2}\chi_{6/5Q})||_{L^{v}(\mu)}\\
&\leq&
C\mu(6Q)^{-1/v}||(b_{1}-m_{Q}b_{1})f_{1}\chi_{6^{N}B}\chi_{6/5Q}||
_{L^{s_1}(\mu)}||f_{2}\chi_{6/5Q}||_{L^{s_2}(\mu)}\\
&\leq&
C\frac{1}{\mu(6Q)^{1/v}}(\int_{\frac{6}{5}Q}|(b_{1}-m_{Q}b_{1}|^{v_{1}}d\mu(z))^{1/v_{1}}(\int_{\frac{6}{5}Q}|f_{1}(z)|^{p_{1}}d\mu(z))^{1/p_{1}}\\
&&\ \ \times(\int_{\frac{6}{5}Q}|f_{2}(z)|^{p_{2}}d\mu(z))^{1/p_{2}}\\
&\leq&
C(\frac{1}{\mu(6Q)}\int_{\frac{6}{5}Q}|b_{1}-m_{Q}b_{1}|^{v_{1}}d\mu(z))^{1/v_{1}}\\
&&\ \ \times\prod_{i=1}^{2}(\frac{1}{\mu(6Q)^{1-\alpha p_{i}/2}}\int_{\frac{6}{5}Q}|f_{i}(z)|^{p_{i}}d\mu(z))^{1/p_{i}}\\
&\leq&
C||b_{1}||_{\ast}M^{(\alpha/2)}_{p_{1},(5)}f_{1}(x)M^{(\alpha/2)}_{p_{2},(5)}f_{2}(x).
\end{eqnarray*}

We can also obtain
\begin{equation*}
|m_{Q}(H_{3})|+|m_{Q}(H_{4})|\leq
C||b_{1}||_{\ast}M^{(\alpha/2)}_{p_{1},(5)}f_{1}(x)M^{(\alpha/2)}_{p_{2},(5)}f_{2}(x).
\end{equation*}
For $H_{5}$, since $z\in Q$, by (i) of Definition 1.5, Lemma 2.2,
Lemma 2.3, H\"{o}lder's inequality and the properties of $\lambda$
and $Q$ is a doubling ball, we deduce
\begin{eqnarray*}
&&|H_{5}(z)|\\
&&\leq C
\int_{6^{N}B}\int_{X\backslash\frac{6}{5}Q}\frac{|b_{1}(y_{1})-m_{Q}b_{1}||f_{1}(y_{1})||f_{2}(y_{2})|d\mu(y_{1})d\mu(y_{2})}
{[\sum_{j=1}^{2}\lambda(x,d(x,y_{j}))]^{2-\alpha}}\\
&&\leq
C\int_{6^{N}B}|f_{2}(y_{2})|d\mu(y_{2})\sum_{k=1}^{\infty}\int_{6^{k}\frac{6}{5}Q}
\frac{|b_{1}(y_{1})-m_{Q}b_{1}||f_{1}(y_{1})|}{(\lambda(z,5\times6^{k}\frac{6}{5}r_{Q}))^{2-\alpha}}d\mu(y_{1})\\
&&\leq
C\int_{6^{N}B}|f_{2}(y_{2})|d\mu(y_{2})\sum_{k=1}^{\infty}6^{-km(1-\alpha/2)}\\
&&\ \ \ \times \int_{6^{k}\frac{6}{5}Q}
\frac{1}{[\lambda(z,5\times\frac{6}{5}r_{Q})]^{1-\alpha/2}}\frac{|b_{1}(y_{1})-m_{Q}b_{1}||f_{1}(y_{1})|d\mu(y_{1})}{[\lambda(z,5\times6^{k}\frac{6}{5}r_{Q})]^{1-\alpha/2}}\\
&&\leq
C\frac{1}{[\lambda(z,6r_{Q})]^{1-\alpha/2}}\int_{6^{N}B}|f_{2}(y_{2})|d\mu(y_{2})\sum_{k=1}^{\infty}6^{-km(1-\alpha/2)}\\
&&\ \ \times\frac{1}{[\lambda(z,5\times6^{k}\frac{6}{5}r_{Q})]^{1-\alpha/2}}\times
\biggl[\int_{6^{k}\frac{6}{5}Q}|b_{1}(y_{1})-m_{6^{k}\frac{6}{5}Q}(b_{1})||f_{1}(y_{1})|d\mu(y_{1})\\
&&\ \ \ +\int_{6^{k}\frac{6}{5}Q}|m_{6^{k}\frac{6}{5}Q}(b_{1})-m_{Q}b_{1}||f_{1}(y_{1})|d\mu(y_{1})\biggr]\\
&&\leq
C\frac{1}{[\lambda(z,6r_{Q})]^{1-\alpha/2}}\int_{6^{N}B}|f_{2}(y_{2})|d\mu(y_{2})\sum_{k=1}^{\infty}6^{-km(1-\alpha/2)}\\
&&\ \ \
\times\biggl[\biggl(\frac{1}{\lambda(z,5\times6^{k}\frac{6}{5}r_{Q})}\int_{6^{k}\frac{6}{5}Q}
|b_{1}(y_{1})-m_{6^{k}\frac{6}{5}Q}(b_{1})|^{p'_{1}}d\mu(y_{1})\biggr)^{1/p'_{1}}\\
&&\ \ \ \ \times\biggl(\frac{1}{[\lambda(z,6^{k+1}\frac{6}{5}r_{Q})]^{1-\alpha p_{1}/2}}\int_{6^{k}\frac{6}{5}Q}|f_{1}(y_{1})|^{p_{1}}d\mu(y_{1})\biggr)^{1/p_{1}}\\
&&\ \ \ \ +k||b_{1}||_{\ast}\frac{1}{[\lambda(z,5\times6^{k}\frac{6}{5}r_{Q})]^{1-\alpha/2}}\int_{6^{k}\frac{6}{5}Q}|f_{1}(y_{1})|d\mu(y_{1})\biggr]\\
&&\leq
C||b_{1}||_{\ast}M^{(\alpha/2)}_{p_{1},(5)}f_{1}(x)\sum_{k=1}^{N}\frac{1}{[\lambda(z,6r_{Q})]^{1-\alpha/2}}\int_{6^{k}B}|f_{2}(y_{2})|d\mu(y_{2})\\
&&\leq
C||b_{1}||_{\ast}M^{(\alpha/2)}_{p_{1},(5)}f_{1}(x)\sum_{k=1}^{N}[\frac{\mu(5\times6^{k}B)}{\lambda(z,5\times6^{k}r_{B})}]^{1-\alpha/2}[\frac{\lambda(z,5\times6^{k}r_{B})}{\lambda(z,6r_{Q})}]^{1-\alpha/2}\\
&&\ \ \ \ \frac{1}{[\mu(5\times6^{k}B)]^{1-\alpha/2}}\int_{6^{k}B}|f_{2}(y_{2})|d\mu(y_{2})\\
&&\leq
CK^{(\alpha/2)}_{B,Q}||b_{1}||_{\ast}M^{(\alpha/2)}_{p_{1},(5)}f_{1}(x)M^{(\alpha/2)}_{p_{2},(5)}f_{2}(x).
\end{eqnarray*}
Then
\begin{equation*}
|m_{Q}(H_{5})|\leq
CK^{(\alpha/2)}_{B,Q}||b_{1}||_{\ast}M^{(\alpha/2)}_{p_{1},(5)}f_{1}(x)M^{(\alpha/2)}_{p_{2},(5)}f_{2}(x).
\end{equation*}
In the similar way to estimate $m_{Q}(H_{5})$, it follows that
\begin{equation*}
|m_{Q}(H_{6})|+|m_{Q}(H_{7})|\leq
CK^{(\alpha/2)}_{B,Q}||b_{1}||_{\ast}M^{(\alpha/2)}_{p_{1},(5)}f_{1}(x)M^{(\alpha/2)}_{p_{2},(5)}f_{2}(x).
\end{equation*}
From (\ref{equ44}) in Lemma 2.2, we deduce
\begin{eqnarray*}
&&F_{21}\\
&&\leq C\biggr[||b_{1}||_{\ast}||b_{2}||_{\ast}M_{r,(6)}(I_{\alpha,2}(f_{1},f_{2}))(x)
+||b_{1}||_{\ast}M_{r,(6)}([b_{2},I_{\alpha,2}](f_{1},f_{2}))(x)\\
&&+||b_{2}||_{\ast}M_{r,(6)}([b_{1},I_{\alpha,2}](f_{1},f_{2}))(x)
+||b_{1}||_{\ast}||b_{2}||_{\ast}M^{(\alpha/2)}_{p_{1},(5)}f_{1}(x)M^{(\alpha/2)}_{p_{2},(5)}f_{2}(x)\biggr].
\end{eqnarray*}
$F_{22}$ and $F_{23}$ also have similar estimate of $F_{21}$,
therefore,
\begin{eqnarray*}
&&F_{2}\\
&&\leq C\biggr[||b_{1}||_{\ast}||b_{2}||_{\ast}M_{r,(6)}(I_{\alpha,2}(f_{1},f_{2}))(x)
+||b_{1}||_{\ast}M_{r,(6)}([b_{2},I_{\alpha,2}](f_{1},f_{2}))(x)\\
&&+||b_{2}||_{\ast}M_{r,(6)}([b_{1},I_{\alpha,2}](f_{1},f_{2}))(x)
+||b_{1}||_{\ast}||b_{2}||_{\ast}M^{(\alpha/2)}_{p_{1},(5)}f_{1}(x)M^{(\alpha/2)}_{p_{2},(5)}f_{2}(x)\biggr].
\end{eqnarray*}
From $F_{3}$ to $F_{6}$, using the similar method to estimate
$I_{4}$, we conclude

\begin{equation}\label{equ59}
F_{3}+F_{4}+F_{5}+F_{6}\leq
C||b_{1}||_{\ast}||b_{2}||_{\ast}M^{(\alpha/2)}_{p_{1},(5)}f_{1}(x)M^{(\alpha/2)}_{p_{2},(5)}f_{2}(x).
\end{equation}
Thus (\ref{equ34}) holds by from (\ref{equ46}) to (\ref{equ59}) and hence (\ref{equ30}) is proved.  With
the same method to prove (\ref{equ30}), we can obtain that (\ref{equ31}) and (\ref{equ32}) are
also hold. Here we omit the details. Thus Lemma 2.4 is proved.
\end{proof}

\begin{proof}[Proof of Theorem \ref{thm-main1.2}]
 Let $1<p_{1},\ p_{2},\ q<\infty$,
$\dfrac{1}{q}=\dfrac{1}{p_{1}}+\dfrac{1}{p_{2}}$, $1<r<q$,
 $f_{1}\in
L^{p_{1}}(\mu)$, $f_{2}\in L^{p_{2}}(\mu)$, $b_{1}\in RBMO(\mu)$ and
$b_{2}\in RBMO(\mu)$.
 By $|f(x)|\leq
Nf(x)$, Lemma 2.1-2.4, Theorem \ref{thm-main1.1}, H\"{o}rder's
inequality and the boundedness of $M^{(\alpha/2)}_{r,(\rho)}$ and $M_{r,(\rho)}$
for $0<\alpha<2$, $5 \leq\rho$ and $r<q$, we obtain
\begin{eqnarray*}
&&||[b_{1},b_{2},I_{\alpha,2}](f_{1},f_{2})||_{L^{q}(\mu)}\\
&\leq& ||N([b_{1},b_{2},I_{\alpha,2}](f_{1},f_{2}))||_{L^{q}(\mu)}\\
&\leq& C||M^{\sharp,(\alpha/2)}([b_{1},b_{2},I_{\alpha,2}](f_{1},f_{2}))||_{L^{q}(\mu)}\\
&\leq& C||b_{1}||_{\ast}||b_{2}||_{\ast}||M_{r,(6)}(I_{\alpha,2}(f_{1},f_{2}))||_{L^{q}(\mu)}\\
&&+C||b_{1}||_{\ast}||M_{r,(6)}([b_{2},I_{\alpha,2}](f_{1},f_{2}))||_{L^{q}(\mu)}\\
&&+C||b_{2}||_{\ast}||M_{r,(6)}([b_{1},I_{\alpha,2}](f_{1},f_{2}))||_{L^{q}(\mu)}\\
&&+C||b_{1}||_{\ast}||b_{2}||_{\ast}||M^{(\alpha/2)}_{p_{1},(5)}f_{1}(x)M^{(\alpha/2)}_{p_{2},(5)}f_{2}(x)||_{L^{q}(\mu)}\\
&\leq& C||b_{1}||_{\ast}||b_{2}||_{\ast}||f_{1}(x)||_{L^{p_{1}}(\mu)}||f_{2}(x)||_{L^{p_{2}}(\mu)}\\
&& +C||b_{1}||_{\ast}||([b_{2},I_{\alpha,2}](f_{1},f_{2}))||_{L^{q}(\mu)}\\
&&+C||b_{2}||_{\ast}||([b_{1},I_{\alpha,2}](f_{1},f_{2}))||_{L^{q}(\mu)}\\
&\leq& C||b_{1}||_{\ast}||b_{2}||_{\ast}||f_{1}(x)||_{L^{p_{1}}(\mu)}||f_{2}(x)||_{L^{p_{2}}(\mu)}\\
&& +C||b_{1}||_{\ast}||M^{\sharp,(\alpha/2)}([b_{2},I_{\alpha,2}](f_{1},f_{2}))||_{L^{q}(\mu)}\\
&&+C||b_{2}||_{\ast}||M^{\sharp,(\alpha/2)}([b_{1},I_{\alpha,2}](f_{1},f_{2}))||_{L^{q}(\mu)}\\
&\leq& ||b_{1}||_{\ast}||b_{2}||_{\ast}||f_{1}(x)||_{L^{p_{1}}(\mu)}||f_{2}(x)||_{L^{p_{2}}(\mu)}\\
&&+C||b_{1}||_{\ast}||M_{r,(6)}(I_{\alpha,2}(f_{1},f_{2}))(x)||_{L^{q}(\mu)}\\
&&+C||b_{1}||_{\ast}||M^{(\alpha/2)}_{p_{1},(5)}f_{1}(x)M^{(\alpha/2)}_{p_{2},(5)}f_{2}(x)||_{L^{q}(\mu)}\\
&&+C||b_{2}||_{\ast}||M_{r,(6)}(I_{\alpha,2}(f_{1},f_{2}))(x)||_{L^{q}(\mu)}\\
&&+C||b_{2}||_{\ast}||M^{(\alpha/2)}_{p_{1},(5)}f_{1}(x)M^{(\alpha/2)}_{p_{2},(5)}f_{2}(x)||_{L^{q}(\mu)}\\
&\leq& C||b_{1}||_{\ast}||b_{2}||_{\ast}||f_{1}(x)||_{L^{p_{1}}(\mu)}||f_{2}(x)||_{L^{p_{2}}(\mu)}.
\end{eqnarray*}
Thus the proof of Theorem \ref{thm-main1.2} is finished.
\end{proof}

\section{Applications}
In this section, we apply Theorems \ref{thm-main1.1}
and Theorem \ref{thm-main1.2} to study a specific fractional
integral operator.
\begin{lemma}\cite{FYY2}\label{l3.1}
 Assume that   $diam(X)=\infty$.
 Let $\alpha\in(0,1)$, $p\in(1,1/\alpha)$ and $1/q=1/p-\alpha$.
If $\lambda$ satisfies the
{$\epsilon$-weak reverse doubling condition}
for some $\epsilon\in(0,\min\{\alpha,1-\alpha,1/q\})$, then
\begin{equation*}
T_{\alpha} f(x):=\int_X \frac{f(y)}{[\lambda(y,d(x,y))]^{1-\alpha}}\,d\mu(y).
\end{equation*}
is bounded from $\L^{p}(\mu)$ into $L^{q}(\mu)$.
\end{lemma}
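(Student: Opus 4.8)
The plan is to deduce the $L^{p}(\mu)\to L^{q}(\mu)$ boundedness of $T_{\alpha}$ from a Hedberg-type pointwise estimate
$$T_{\alpha}(|f|)(x)\le C\,[M_{(5)}f(x)]^{p/q}\,\|f\|_{L^{p}(\mu)}^{\alpha p}\qquad(x\in X),$$
and then to integrate it, using the $L^{p}(\mu)$-boundedness of the non-centered maximal operator $M_{(5)}$ recalled above (applicable since $5\ge 5$ and $p>1$). Here $1<p<q<\infty$ and, crucially, $\alpha+\tfrac1q=\tfrac1p$, so $\tfrac pq+\alpha p=1$, and the displayed bound gives $\|T_{\alpha}f\|_{L^{q}(\mu)}^{q}\le C\|f\|_{L^{p}(\mu)}^{\alpha pq}\|M_{(5)}f\|_{L^{p}(\mu)}^{p}\le C\|f\|_{L^{p}(\mu)}^{\alpha pq+p}$, with $(\alpha pq+p)/q=p\alpha+\tfrac pq=p(\alpha+\tfrac1q)=1$, which is exactly $\|T_{\alpha}f\|_{L^{q}(\mu)}\le C\|f\|_{L^{p}(\mu)}$. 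Since $|T_{\alpha}f|\le T_{\alpha}(|f|)$ we may assume $f\ge0$; in proving the pointwise estimate we may assume $0<M_{(5)}f(x)<\infty$ (if $M_{(5)}f(x)=0$ then, letting $R\to\infty$ below, $T_{\alpha}f(x)=0$, and $M_{(5)}f(x)<\infty$ for a.e.\ $x$ by the boundedness of $M_{(5)}$).

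First I would fix $x$, choose a threshold radius $R>0$ to be optimized at the end, and split
$$T_{\alpha}f(x)=\int_{d(x,y)<R}\frac{f(y)\,d\mu(y)}{[\lambda(y,d(x,y))]^{1-\alpha}}+\int_{d(x,y)\ge R}\frac{f(y)\,d\mu(y)}{[\lambda(y,d(x,y))]^{1-\alpha}}=:J_{1}(R)+J_{2}(R),$$
decomposing each part into dyadic annuli. For $J_{1}(R)$, on $\{2^{-k-1}R\le d(x,y)<2^{-k}R\}$, $k\ge0$, I would use the monotonicity of $\lambda$ in the radius, the upper doubling property and the regularity (\ref{equ3}) to replace $\lambda(y,d(x,y))$ by $\lambda(x,2^{-k}R)$ up to a multiplicative constant, and then use $\mu(5B(x,r))\le\lambda(x,5r)\le C\lambda(x,r)$ together with the definition of $M_{(5)}$ to get $\int_{B(x,2^{-k}R)}f\,d\mu\le C\lambda(x,2^{-k}R)M_{(5)}f(x)$. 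This bounds the $k$-th term by $C\lambda(x,2^{-k}R)^{\alpha}M_{(5)}f(x)$; summing and invoking the $\epsilon$-weak reverse doubling condition (which yields $\lambda(x,2^{-k}R)\le C2^{-k\epsilon}\lambda(x,R)$) gives a convergent geometric series, so $J_{1}(R)\le C\lambda(x,R)^{\alpha}M_{(5)}f(x)$.

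For $J_{2}(R)$, on $\{2^{k}R\le d(x,y)<2^{k+1}R\}$, $k\ge0$, I would again use (\ref{equ3}) and upper doubling to replace $\lambda(y,d(x,y))$ by $\lambda(x,2^{k}R)$, then apply H\"older's inequality in $y$ with exponents $p,p'$ and $\mu(B(x,2^{k+1}R))\le\lambda(x,2^{k+1}R)\le C\lambda(x,2^{k}R)$; since $\tfrac1{p'}-(1-\alpha)=\alpha-\tfrac1p=-\tfrac1q<0$ (this is where $p<1/\alpha$ is used), the $k$-th term is at most $C\|f\|_{L^{p}(\mu)}\lambda(x,2^{k}R)^{-1/q}$, and the $\epsilon$-weak reverse doubling condition ($\lambda(x,2^{k}R)\ge c2^{k\epsilon}\lambda(x,R)$) makes the series converge: $J_{2}(R)\le C\|f\|_{L^{p}(\mu)}\lambda(x,R)^{-1/q}$. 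Hence $T_{\alpha}f(x)\le C\big(\lambda(x,R)^{\alpha}M_{(5)}f(x)+\lambda(x,R)^{-1/q}\|f\|_{L^{p}(\mu)}\big)$. Because $\mathrm{diam}(X)=\infty$ and $R\mapsto\lambda(x,R)$ is non-decreasing and, by the weak reverse doubling, takes arbitrarily small and arbitrarily large values, we may choose $R$ with $\lambda(x,R)$ comparable to $(\|f\|_{L^{p}(\mu)}/M_{(5)}f(x))^{p}$; since $\alpha+\tfrac1q=\tfrac1p$ both terms on the right become comparable to $[M_{(5)}f(x)]^{p/q}\|f\|_{L^{p}(\mu)}^{\alpha p}$, yielding the Hedberg estimate of the first paragraph and completing the proof.

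The main obstacle I expect is the repeated, careful passage between $\lambda(y,d(x,y))$ — which is what genuinely appears in the kernel of $T_{\alpha}$ — and $\lambda(x,\cdot)$, carried out by iterating (\ref{equ3}), the upper doubling inequality and the standing monotonicity and growth hypotheses on $\lambda$, while simultaneously bookkeeping that every geometric series actually sums and that the $R$-optimization is legitimate. The restriction $\epsilon\in(0,\min\{\alpha,1-\alpha,1/q\})$ is exactly what keeps each exponent produced in the two dyadic decompositions (the gain $\alpha$ in $J_1$, the comparison errors of order $\epsilon$, and the decay exponent $1/q$ in $J_2$) of the right sign and of comparable size, so that the summations and the final balancing over $R$ all go through; everything else is routine use of H\"older's inequality and of the elementary estimate $\mu(\rho B)\le\lambda(\cdot,\rho r)\le C_{\rho}\lambda(\cdot,r)$.
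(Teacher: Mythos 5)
Your proof cannot be compared with an in-paper argument, because the paper does not prove this lemma: it is imported verbatim from \cite{FYY2} and used as a black box in Section 3. Judged on its own merits, your Hedberg-type argument is correct and is the standard route to such a bound: the split at a threshold radius $R$, the annular estimates $J_1(R)\lesssim \lambda(x,R)^{\alpha}M_{(5)}f(x)$ and $J_2(R)\lesssim \lambda(x,R)^{-1/q}\|f\|_{L^{p}(\mu)}$, the optimization in $R$, and the final exponent bookkeeping ($1-\alpha p=p/q$, $\alpha+1/q=1/p$) all check out, as does the two-step replacement of $\lambda(y,d(x,y))$ by $\lambda(x,2^{\pm k}R)$ via monotonicity, upper doubling and (\ref{equ3}).

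Three details deserve explicit care. First, the $\epsilon$-weak reverse doubling condition of \cite{FYY2} is not literally the power bound $\lambda(x,2^{k}R)\ge c\,2^{k\epsilon}\lambda(x,R)$ you invoke; it only provides constants $C(a)\ge 1$ with $\lambda(x,aR)\ge C(a)\lambda(x,R)$ and $\sum_{k}[C(2^{k})]^{-\epsilon}<\infty$. Your two series still converge because $\alpha>\epsilon$ and $1/q>\epsilon$ give $[C(2^{k})]^{-\alpha}\le[C(2^{k})]^{-\epsilon}$ and $[C(2^{k})]^{-1/q}\le[C(2^{k})]^{-\epsilon}$ (alternatively, the standing hypothesis $\lambda(x,ar)\ge a^{m}\lambda(x,r)$ in Remark 1.3(ii) is a genuine power-type reverse doubling), but the argument should be phrased with the actual condition. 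Second, the existence of $R$ with $\lambda(x,R)$ comparable to $(\|f\|_{L^{p}(\mu)}/M_{(5)}f(x))^{p}$ needs justification: weak reverse doubling together with $\mathrm{diam}(X)=\infty$ yields $\lambda(x,r)\to\infty$ as $r\to\infty$ and $\lambda(x,r)\to 0$ as $r\to 0^{+}$, and the upper doubling inequality $\lambda(x,r)\le C_{\lambda}\lambda(x,r/2)$ bounds the jumps, so a suitable $R$ exists up to a harmless constant; you gesture at this, but it is exactly where $\mathrm{diam}(X)=\infty$ enters and should be spelled out. Third, your argument never uses $\epsilon<1-\alpha$; that restriction is needed in \cite{FYY2} for other estimates (kernel size/regularity of $T_{\alpha}$), not for the $L^{p}\to L^{q}$ bound itself, which is consistent but worth noting so a reader does not look for a missing step.
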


\begin{theorem}\label{}
Under the same assumption as that of Lemma \ref{l3.1},
 the conclusions of Theorem \ref{thm-main1.1} and
Theorems \ref{thm-main1.2} hold true, if $I_\alpha$ therein is replaced by $T_\alpha$.
\end{theorem}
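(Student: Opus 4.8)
The plan is to realize the abstract data of Theorems~\ref{thm-main1.1} and~\ref{thm-main1.2} by the concrete operator $T_\alpha$. For $m=2$ let $T_{\alpha,2}$ denote the bilinear operator with kernel
\[
K_{T}(x,y_{1},y_{2})=\frac{1}{[\lambda(y_{1},d(x,y_{1}))+\lambda(y_{2},d(x,y_{2}))]^{2-\alpha}},
\]
which collapses to $T_\alpha$ when $m=1$, and take the operator $I_\beta$ that occurs in the hypotheses of those theorems to be $T_\beta$. It then suffices to verify that: (i) $T_\beta$ is bounded from $L^{r}(\mu)$ into $L^{s}(\mu)$ for every $r\in(1,1/\beta)$ with $1/s=1/r-\beta$; and (ii) $K_{T}$ satisfies the size and regularity conditions \eqref{equ6}, \eqref{equ7} and \eqref{equ8}, i.e.\ $T_{\alpha,2}$ is a bilinear fractional integral operator. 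Granted (i) and (ii), Theorems~\ref{thm-main1.1} and~\ref{thm-main1.2} applied with $I_{\alpha,2}=T_{\alpha,2}$ (and, for the commutator statement, with $b_{1},b_{2}\in RBMO(\mu)$) immediately give the asserted $L^{q}(\mu)$ bounds for $T_{\alpha,2}(f_{1},f_{2})$ and for $[b_{1},b_{2},T_{\alpha,2}](f_{1},f_{2})$.

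Claim (i) is immediate from Lemma~\ref{l3.1}: since ``the same assumption as that of Lemma~\ref{l3.1}'' keeps the $\epsilon$-weak reverse doubling condition in force and $\beta\in(0,1)$, Lemma~\ref{l3.1} with $\alpha$ replaced by $\beta$ gives exactly the boundedness of $T_\beta$ required in Theorems~\ref{thm-main1.1} and~\ref{thm-main1.2}. The size bound \eqref{equ6} in (ii) is also quick: choosing $r=d(x,y_{j})$ in \eqref{equ3} yields $\lambda(x,d(x,y_{j}))\approx\lambda(y_{j},d(x,y_{j}))$, whence $|K_{T}(x,y_{1},y_{2})|\le C[\lambda(x,d(x,y_{1}))+\lambda(x,d(x,y_{2}))]^{-(2-\alpha)}$, which is \eqref{equ6} with $m=2$.

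The heart of the matter, and the step I expect to demand the most care, is the kernel regularity \eqref{equ7}--\eqref{equ8}. By the comparability just used we may put $S(x):=\lambda(x,d(x,y_{1}))+\lambda(x,d(x,y_{2}))$, so that $K_{T}(x,\cdot)\approx S(x)^{-(2-\alpha)}$. Using the mean value inequality $|a^{-\gamma}-b^{-\gamma}|\le\gamma|a-b|\min(a,b)^{-\gamma-1}$ with $\gamma=2-\alpha$, estimate \eqref{equ7} will follow once we show, whenever $Cd(x,x')\le\max_{j}d(x,y_{j})=:R$, that $\min(S(x),S(x'))\approx S(x)$ and
\[
|S(x)-S(x')|\le C\,[d(x,x')/R]^{\delta}\,S(x)
\]
for some $\delta\in(0,1]$ (this suffices because $R\le\sum_{j}d(x,y_{j})\le 2R$, so the displayed inequality is \eqref{equ7} up to a constant). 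To prove it, split $|S(x)-S(x')|\le\sum_{j}|\lambda(x,d(x,y_{j}))-\lambda(x',d(x',y_{j}))|$ and treat each $j$ in two regimes. If $d(x,x')\le\tfrac12 d(x,y_{j})$ then, by the triangle inequality, $d(x',y_{j})$ and $d(x,y_{j})$ differ by at most $d(x,x')$, and the $\epsilon$-weak growth condition (the condition on $\lambda$ equivalent to upper doubling, part (a)), applied with the two base points $x,x'$, bounds the $j$-th term by $C[d(x,x')/d(x,y_{j})]^{\epsilon}\lambda(x,d(x,y_{j}))\le C[d(x,x')/R]^{\epsilon}S(x)$. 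If instead $d(x,x')>\tfrac12 d(x,y_{j})$, then $d(x,y_{j})\le 2d(x,x')$ and $d(x',y_{j})\le 3d(x,x')$, so by monotonicity of $\lambda$ in its second variable, \eqref{equ3}, and the lower growth bound $\lambda(x,ar)\ge a^{m}\lambda(x,r)$, both $\lambda(x,d(x,y_{j}))$ and $\lambda(x',d(x',y_{j}))$ are $\le C\lambda(x,2d(x,x'))\le C[d(x,x')/R]^{m}S(x)$. Taking $\delta=\min\{\epsilon,m\}$ (and splitting $\delta=\delta_{1}+\delta_{2}$ with $\delta_{1},\delta_{2}>0$ where the proof of Lemma~2.4 needs it) gives \eqref{equ7}; the comparability $\min(S(x),S(x'))\approx S(x)$ follows from the same ingredients applied to the index realizing $R$, since there $d(x',y_{j})\ge d(x,y_{j})-d(x,x')\ge\tfrac12 R$. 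Finally \eqref{equ8} is proved in exactly the same way, now perturbing one of the $y_{j}$ and reusing the $\epsilon$-weak growth condition. Throughout, the one subtlety is the split into the regimes $d(x,x')\le\tfrac12 d(x,y_{j})$ and $d(x,x')>\tfrac12 d(x,y_{j})$: it is precisely the lower growth bound on $\lambda$ that controls the second regime, where $\lambda$ is being evaluated at scales comparable to or smaller than $d(x,x')$.
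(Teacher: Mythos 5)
Your overall strategy is sound, and it is in fact more than the paper does: the paper states this theorem without any proof, treating it as an immediate consequence of Lemma \ref{l3.1}, which supplies the boundedness hypothesis on the linear fractional integral required in Theorems \ref{thm-main1.1} and \ref{thm-main1.2} (note that by \eqref{equ3} one has $\lambda(y,d(x,y))\approx\lambda(x,d(x,y))$, so $T_\beta$ is exactly the operator whose boundedness those theorems assume; also, since the paper standing-assumes $\lambda(x,ar)\ge a^{m}\lambda(x,r)$, the $\epsilon$-weak reverse doubling needed for every $\beta\in(0,1)$ is automatic). Your additional step of verifying that the bilinear kernel $K_T$ satisfies \eqref{equ6}--\eqref{equ8} is a legitimate (and arguably necessary) reading of ``replaced by $T_\alpha$,'' and your size estimate is fine.

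However, the regularity verification has two genuine flaws as written. First, the reduction of $|K_T(x,y_1,y_2)-K_T(x',y_1,y_2)|$ to $|S(x)^{-(2-\alpha)}-S(x')^{-(2-\alpha)}|$ via ``$K_T(x,\cdot)\approx S(x)^{-(2-\alpha)}$'' is invalid: comparability with uncontrolled constants does not pass to differences (if $K_T(x)=2S(x)^{-\gamma}$ and $K_T(x')=S(x')^{-\gamma}$ with $S(x)=S(x')$, the left difference is large while the right one vanishes). The correct move is to apply the mean value inequality directly to $\Lambda(x):=\lambda(y_1,d(x,y_1))+\lambda(y_2,d(x,y_2))$, the actual denominator, prove $|\Lambda(x)-\Lambda(x')|\lesssim [d(x,x')/R]^{\delta}\Lambda(x)$ and $\min(\Lambda(x),\Lambda(x'))\gtrsim\Lambda(x)$ (easier than your $S$-estimate, since only the radius moves and condition (a) is used with base point $y_j$ fixed), and only at the end convert $\Lambda(x)\approx S(x)$ to reach the form \eqref{equ7}. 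Second, in your first regime the displayed inequality $[d(x,x')/d(x,y_j)]^{\epsilon}\lambda(x,d(x,y_j))\le C[d(x,x')/R]^{\epsilon}S(x)$ is false when $d(x,y_j)\ll R$: the factor $[R/d(x,y_j)]^{\epsilon}$ is not absorbed by $\lambda(x,d(x,y_j))\le S(x)$. You must use the lower growth bound, $\lambda(x,d(x,y_j))\le [d(x,y_j)/R]^{m}\lambda(x,R)\le [d(x,y_j)/R]^{m}S(x)$, and then optimize $u^{\epsilon}v^{m-\epsilon}$ with $u=d(x,x')/R$, $v=d(x,y_j)/R\ge 2u$, which yields the exponent $\min\{\epsilon,m\}$ -- consistent with your final choice of $\delta$, but not delivered by the step as you wrote it. Both defects are repairable with ingredients you already use, but as stated the regularity proof does not go through.
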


{\bf Acknowledgements.}\quad Huajun Gong is partial supported by Posted-doctor Foundation of China(No. 2015M580728) and Shenzhen University (No. 2014-62). Rulong Xie is partial supported by NSF of Anhui Province (No. 1608085QA12). Chen Xu is partial supported by No.2013B040403005; No.GCZX-A1409 and NSFC £¨No.61472257£©.

\bibliographystyle{amsplain}

\end{document}